\theoremstyle{plain}
\newtheorem{prop}{Proposition}[section]
\newtheorem{theorem}{Theorem}[section]
\newtheorem{lem}{Lemma}[section]
\theoremstyle{remark}
\newtheorem*{rmk}{Remark}
\theoremstyle{definition}
\newtheorem{definition}{Definition}[section]
\DeclareMathOperator{\GL}{GL}
\DeclareMathOperator{\SL}{SL}
\DeclareMathOperator{\Aut}{Aut}
\DeclareMathOperator{\Out}{Out}
\DeclareMathOperator{\ID}{id}
\DeclareMathOperator{\Ker}{Ker}
\DeclareMathOperator{\Res}{Res}
\DeclareMathOperator{\Irr}{Irr}
\DeclareMathOperator{\Bl}{Bl}
\title{Inductive AM condition for the alternating groups in characteristic $2$}
\author{David Denoncin}
\date{}
\begin{document}
\maketitle
\tableofcontents

\setlength{\parskip}{11pt}

\newpage

\abstract{The aim of this paper is to verify the inductive AM condition stated in \cite{Spa} (definition 7.2) for the simple alternating groups in characteristic $2$. Such a condition, if checked for all simple groups in all characteristics would prove the Alperin-McKay conjecture (see \cite{Spa}). We first check the Alperin-McKay conjecture for double cover of symmetric and alternating groups, $\tilde{\mathfrak{S}}_n$ and $\tilde{\mathfrak{A}}_n$. The proof of this will extend known results about blocks of symmetric and alternating groups of a given weight. 
 
The first two parts are notations and basic results on blocks of symmetric, alternating groups and of their double cover that are recalled for convenience. In the third part we determine the number of height zero spin characters of a given block using certain height preserving bijections between blocks of a given weight. Then we determine the number of height zero spin characters in a given block of the normalizer of a defect group. We finish by showing that the inductive AM condition is true for the alternating groups in characteristic $2$, the cases $\mathfrak{A}_7$ and $\mathfrak{A}_6$ being treated separately from the rest.

I would like to thank Marc Cabanes and Britta Sp\"ath for taking the time to discuss all the mathematical issues I encountered while writing this paper.}

\section{General notations}

\subsubsection*{General setup}
In the following $G$ will always denote a finite group and all considered groups will be finite.  The derived subgroup of $G$ is denoted by $G'$ and if $H$ is a subgroup of $G$ then its index is denoted by $[G:H]$. Blocks of a finite group will be $2$-blocks unless otherwise specified and for an introduction to block theory, numerical defect, defect groups and the theory of domination and covering of blocks, our basic reference is \cite{NT}. All references made to \cite{NT} deals with chapter $5$. In general every notion that requires the use of a prime number will be used for the prime number $2$ unless otherwise specified and the prime number will be ommited (Sylow subgroups, cores of partition, etc...). The Brauer correspondence will always be used between blocks of a group $G$ and blocks of the normalizer of a defect group.

\subsubsection*{Group action}
Let $G$ act on a set $X$. If $x\in X$ denote by $x^G$ its orbit and let $G_x$ be the subgroup of $G$ that fixes $x$.

For example $\Aut(G)$ acts naturally on subgroups of $G$ and if $D$ is a subgroup of $D$, $\Aut(G)_D$ will denote the automorphisms of $G$ stabilizing $D$.

\subsubsection*{Central extensions}

The notation $\tilde{G}$ will denote a group such that its center $Z(\tilde{G})$ is a $2$-group. The quotient group $\tilde{G}/Z(\tilde{G})$ will be denoted by $G$ and we will call $\tilde{G}$ a ($2$-)central extension of $G$.

Throughout we deal with the blocks of $ \mathfrak{A}_n$, $\mathfrak{S}_n$ and of their double cover denoted by $ \tilde{\mathfrak{A}}_n$ and $\tilde{\mathfrak{S}}_n$. For details about these groups, see \cite{Sch} and \cite{HoHu}. These are defined as follows :
\begin{displaymath}
  \tilde{\mathfrak{S}}_n=\left\langle z,t_i, 1\leq i\leq n-1 | \begin{array}{c}z^2=1, t_i^2=z \\(t_it_{i+1})^2=z \textrm{ for $1 \leq i \leq n-2$ } \\  (t_it_j)^3=z  \textrm{ for $i,j$ such that $ |i-j|\geq 2$}\\\end{array}\right\rangle
\end{displaymath}
The center of $\tilde{\mathfrak{S}}_n$ is denoted by $Z$ (except in some places in section $5$ and $6$ where $Z$ will denote a central subgroup of a given group), equals $\{1,z\}$ and we have an exact sequence :
\begin{displaymath}
  1 \rightarrow Z \rightarrow \tilde{\mathfrak{S}}_n \stackrel{\pi}{\rightarrow} \mathfrak{S}_n \rightarrow 1
\end{displaymath}

Then define $\tilde{\mathfrak{A}}_n=\pi^{-1}(\mathfrak{A}_n)$. Recall that $\tilde{\mathfrak{A}}_n$ is the universal covering group of $\mathfrak{A}_n$ for $n\neq 6,7$ and $n\geq 5$ (see the end of chapter $2$ in \cite{HoHu}), and that for $n\geq 4$ the derived subgroup of $\tilde{\mathfrak{S}}_n$ is $\tilde{\mathfrak{A}}_n$.

If $G_1$ and $G_2$ are two groups such that $G_1\cap G_2=Z(G_1)=Z(G_2)=Z$, then we denote by $G_1 \stackrel{\sim}{\times} G_2$ the central product of $G_1$ and $G_2$, that is the group $G_1\times G_2 /\Delta Z$ where $\Delta Z=\{(z,z) , z \in Z\}$. We have an exact sequence :

\begin{displaymath}
  1 \rightarrow \Delta Z \rightarrow G_1\times G_2 \rightarrow G_1\stackrel{\sim}{\times} G_2 \rightarrow 1
\end{displaymath}

We will also need the universal covering groups of $\mathfrak{A}_6$ and $\mathfrak{A}_7$ respectively denoted by $6.\mathfrak{A}_6$ and $6.\mathfrak{A}_7$  (see \cite{GLS3}, theorem 5.2.3).

\subsubsection*{Characters and blocks}
Most of the notations are similar to those in \cite{Spa}. A character of a group will be an irreducible complex character unless otherwise specified. The set of irreducible characters of a group $G$ is denoted by $\Irr(G)$ and is endowed with its usual scalar product denoted by $\left\langle,\right\rangle$. The set of characters of one of the blocks $B$ of $G$ is denoted by $\Irr(B)$.

A spin character of a subgroup $G$ of $\tilde{\mathfrak{S}}_n$ containing $Z$ is a character such that $Z$ is not in its kernel.

If $b$ is a block, $\Irr_0(b)$ denotes the set of height zero characters of $b$ and $|\Irr_0(b)|$ its cardinality. Denote by $\Irr_0(G|D)$ the set of all height zero characters of $G$ which lie in some block with defect group $D$. The set of all blocks of $G$ with defect group $D$ is denoted by $\Bl(G|D)$. If $H$ is a subgroup of $G$ and $\chi \in \Irr(G)$, set $\Irr(H|\chi)=\{\mu \in \Irr(H), \left\langle \mu, \Res_H^G(\chi)\right\rangle \neq 0\}$.

\subsubsection*{Partitions}
Here notations are mostly borrowed from \cite{BeOl2} and \cite{GePf}. We will set $\mathscr{P}(n)$ and $\mathscr{D}(n)$ respectively to be the set of partitions and the set of partitions without repetitions (also called bar partitions) of the integer $n$. Next $\mathscr{P}^s(n)$ denotes the self-conjugate partitions of $n$ and $\mathscr{P}^a(n)$ denotes the set  $\mathscr{P}(n)-\mathscr{P}^s(n)$. If $\lambda \in \mathscr{P}(n)$ then denote by $\lambda^{\star}$ its conjugate partition (see \cite{GePf} \S 5.4). Finally $\mathscr{D}^{+}(n)$ denotes the even bar partitions of $n$ and $\mathscr{D}^{-}(n)$ the odd bar partitions of $n$ (see \cite{BeOl2}).

\section{The $2$-blocks of $\tilde{\mathfrak{S}}_n$ and $\tilde{\mathfrak{A}}_n$, and their characters}

\subsection{The $2$-blocks of symmetric and alternating groups}

For details about the representation theory of symmetric and alternating groups, see \cite{JK}, \cite{Br}, \cite{Ols}, \cite{BeOl1} and \cite{CabEn} \S 5.2. For more specific details about links between representation theory of groups and normal subgroups of index $p$ in characteristic $p$, see \cite{NT}.

The characters of $\mathfrak{S}_n$ are labelled by $\mathscr{P}(n)$, and if $\lambda\in \mathscr{P}^s(n)$ then restriction to $\mathfrak{A}_n$ gives two irreducible characters of $\mathfrak{A}_n$ : $\lambda^+$ and $\lambda^-$. But if $\lambda\in \mathscr{P}^a(n)$ then restrictions of both the characters associated to $\lambda$ and to its conjugate partition give the same irreducible character of $\mathfrak{A}_n$.

Blocks of $\mathfrak{S}_n$ are labelled by cores of partitions, which are self-conjugate and of the form $\{k,k-1,...,1\}$ for some integer $k$. If $\lambda$ is a partition of $n$ then its associated character belongs to the block determined by the core of $\lambda$. The core is a partition of $n-2w$ where $w$ is the weight of the block. Then the core (and thus the block) is determined by its weight as there is no more than one core that is a partition of a given integer. We also know the defect groups : if $B$ is a block of weight $w$  of $\mathfrak{S}_n$, then its defect groups are the Sylow subgroups of $\mathfrak{S}_{2w}$.

Now let us describe the blocks of $\mathfrak{A}_n$. Because $[\mathfrak{S}_{n}:\mathfrak{A}_{n}]=2$ each block of $\mathfrak{A}_{n}$ is covered by a unique block of $\mathfrak{S}_{n}$ (see corollary 5.6 in \cite{NT}). Thus we can define the weight of a block of $\mathfrak{A}_{n}$ by the weight of the unique block of $\mathfrak{S}_{n}$ covering it. Now because cores are self-conjugate, $\mathfrak{S}_n$ and $\mathfrak{A}_n$ have the same number of blocks of positive weight and a bijection is given by block covering.  If $\mathfrak{S}_n$ has a block of weight $0$ (\textsl{i.e}, $n$ is a triangular number) then it covers two blocks of $\mathfrak{A}_n$. If $\lambda$ is a partition of $n$, then either it is a core determining a block of weight $0$ and then both the associated characters in $\mathfrak{A}_n$ are alone in their own block, or all associated characters in $\mathfrak{A}_n$ are in the block determined by the core of $\lambda$. The defect groups of a block of weight $w$ of $\mathfrak{A}_n$ are the Sylow subgroups of $\mathfrak{A}_{2w}$.

\subsection{The 2-blocks of the double covers}

For details about the representations theory of $\tilde{\mathfrak{S}}_n$ and $\tilde{\mathfrak{A}}_n$ see \cite{Sch}, \cite{HoHu} and \cite{BeOl1}.

There are two kinds of characters for $\tilde{\mathfrak{S}}_n$ : spin characters and non spin characters. Non spin characters are characters of $\mathfrak{S}_n$ and thus labelled by partitions of $n$.  
Spin characters are labelled by bar partitions, and if $\lambda$ is an odd bar partition then there are two spin characters of $\tilde{\mathfrak{S}}_n$ associated to it : $\lambda^+$ and $\lambda^-$.  
Those plus all the characters associated with even bar partitions are all the spin characters of $\tilde{\mathfrak{S}}_n$.

The next lemma comes from general results on central extensions (see lemma \ref{brauerquotient}) and relates blocks, defect groups and characters of $\tilde{\mathfrak{S}}_n$ and $\mathfrak{S}_n$. 
\begin{lem}\label{lemdom}
  There is a bijection $\tilde{B}\mapsto B$ given by domination of blocks between the blocks of $\tilde{\mathfrak{S}}_n$ and the blocks of $\mathfrak{S}_n$. Moreover if $\tilde{B}$ dominates $B$ and $D$ is the defect of $B$ then there is an exact sequence : $1 \rightarrow Z \rightarrow \tilde{D} \stackrel{\pi}{\rightarrow} D \rightarrow 1$ where $\tilde{D}$ is the defect group of $\tilde{B}$ and $\pi$ is the natural projection $\tilde{\mathfrak{S}}_n\rightarrow \mathfrak{S}_n$.

Moreover we have that $\Irr(B)$ is a subset of $\Irr(\tilde{B})$ via the natural projection $\tilde{\mathfrak{S}}_n \rightarrow \mathfrak{S}_n$, so that $\Irr(B)=\Irr(\tilde{B})\cap \Irr(\mathfrak{S}_n)$.

\end{lem}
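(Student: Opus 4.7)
The approach is to obtain this lemma as a direct specialization of lemma \ref{brauerquotient}, which is stated at the beginning of the excerpt as a general result about central $p$-extensions. In our setting $Z = Z(\tilde{\mathfrak{S}}_n)$ has order $2$, matching our prime, so the extension $1 \to Z \to \tilde{\mathfrak{S}}_n \stackrel{\pi}{\to} \mathfrak{S}_n \to 1$ falls directly into the hypotheses of that lemma. The whole proof will therefore consist in unpacking what domination, defect groups and characters look like in this concrete case.

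First I would invoke lemma \ref{brauerquotient} to obtain the block bijection: every block $\tilde{B}$ of $\tilde{\mathfrak{S}}_n$ dominates a unique block $B$ of $\mathfrak{S}_n$, obtained by applying the canonical map of group algebras induced by $\pi$ to the block idempotent of $\tilde{B}$, and every block of $\mathfrak{S}_n$ arises in this way from a unique $\tilde{B}$. The exact sequence on defect groups then follows because the central $2$-subgroup $Z$ must be contained in every defect group of $\tilde{\mathfrak{S}}_n$: if $\tilde{D}$ is such a defect group for $\tilde{B}$, then $\pi(\tilde{D})$ is a defect group of the dominated block $B$, and the equality $\tilde{D} = \pi^{-1}(\pi(\tilde{D}))$ (forced by $Z \subseteq \tilde{D}$) shows that the restriction of $\pi$ to $\tilde{D}$ fits into the claimed short exact sequence with kernel $Z$.

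For the character statement I would use that any $\chi \in \Irr(\mathfrak{S}_n)$ inflates through $\pi$ to an irreducible character of $\tilde{\mathfrak{S}}_n$ with $Z$ in its kernel, and conversely that every character of $\tilde{\mathfrak{S}}_n$ trivial on $Z$ descends to $\mathfrak{S}_n$; this realizes $\Irr(\mathfrak{S}_n) \subset \Irr(\tilde{\mathfrak{S}}_n)$. Restricted to $\tilde{B}$, lemma \ref{brauerquotient} identifies $\Irr(B)$ with the subset of $\Irr(\tilde{B})$ consisting of characters trivial on $Z$, which is exactly $\Irr(\tilde{B}) \cap \Irr(\mathfrak{S}_n)$. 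The only real care needed, and therefore the main \emph{obstacle}, is to verify that each conclusion of the general lemma is invoked in the correct direction and that the characterization of non-spin characters as those trivial on $Z$ is compatible with the definition given earlier in the paper; beyond this transcription, no new argument is required.
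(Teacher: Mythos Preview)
Your proposal is correct and matches the paper's approach exactly: the paper introduces this lemma with the sentence ``The next lemma comes from general results on central extensions (see lemma \ref{brauerquotient})'' and gives no further proof, so specializing lemma \ref{brauerquotient} to $\tilde{\mathfrak{S}}_n \to \mathfrak{S}_n$ is precisely what is intended. Your additional unpacking of the defect-group exact sequence and of the character statement is more explicit than the paper but entirely in the same spirit.
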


\begin{rmk}
  A block of $\tilde{\mathfrak{S}}_n$ contains at least one spin character because of theorem 8.1 in \cite{NT}.
\end{rmk}

So $\tilde{\mathfrak{S}}_n$ has the same number of blocks as $\mathfrak{S}_n$ and those are labelled by cores of partitions of $n$. The weight of a block of $\tilde{\mathfrak{S}}_{n}$ is the weight of the corresponding block of $\mathfrak{S}_{n}$. The distribution of irreductible characters of $\tilde{\mathfrak{S}}_n$ into these blocks is known by \cite{BeOl1}, but we will not need it.

Now let us explain the situation for $\tilde{\mathfrak{A}}_n$. If $\lambda$ is a partition of $n$ associated with a non spin character of $\tilde{\mathfrak{S}}_n$, then it is a character of $\mathfrak{S}_n$ and in fact the restriction to $\tilde{\mathfrak{A}}_n$ corresponds to the restriction to $\mathfrak{A}_n$ so that we know what happens. Next if $\lambda$ is a bar partition, then if it is even it corresponds to one character of $\tilde{\mathfrak{S}}_n$ that splits into two characters upon restriction to $\tilde{\mathfrak{A}}_n$, $\lambda^+$ and $\lambda^-$.
If it is odd then it corresponds to two irreducible characters $\lambda^{\pm}$ of $\tilde{\mathfrak{S}}_n$ that both give the same character upon restriction to $\tilde{\mathfrak{A}}_n$.

Regarding the blocks now, lemma \ref{lemdom} is still true by replacing $\tilde{\mathfrak{S}}_n$ by $\tilde{\mathfrak{A}}_n$ and $\mathfrak{S}_n$ by $\mathfrak{A}_n$. So that we have the same phenomenon between blocks of $\tilde{\mathfrak{S}}_n$ and blocks of $\tilde{\mathfrak{A}}_n$ as there is between $\mathfrak{S}_n$ and $\mathfrak{A}_n$. That is if $\tilde{B}_S$ is a block of $\tilde{\mathfrak{S}}_n$ of weight $w$, if $w\geq 1$ then it covers a unique block $\tilde{B}_A$ of $\tilde{\mathfrak{A}}_n$ and $\tilde{B}_S$ is the unique block covering $\tilde{B}_A$. But if $w=0$ then the block $\tilde{B}_S$ covers exactly two blocks of $\tilde{\mathfrak{A}}_n$. We define the weight of a block of $\tilde{\mathfrak{A}}_{n}$ by the weight of the unique block of $\tilde{\mathfrak{S}}_{n}$ covering it, which is also the weight of the corresponding block of $\mathfrak{A}_{n}$.

\section{Height zero spin characters in a block $\tilde{B}$}

In this section we use existing results to compute the number of height zero spin characters in a block $\tilde{B}$ of $\tilde{\mathfrak{S}}_n$ or $\tilde{\mathfrak{A}}_n$. We first deal with the principal block which is an easy case. Then obtain a result about blocks of same weight of double covers of alternating groups, which bring us back to considering principal blocks.

\subsection{The case of a principal block}
\label{sec:case-principal-block}

Let me recall a simple lemma with a direct application that immediately settles the case of principal blocks and will ensure that condition 7.1 of the definition 7.2 of \cite{Spa} are satisfied in the cases of maximal defect, the proof of which being fairly easy.

\begin{lem}\label{simplelem}
  Let $G$ be a finite group and $p$ be a prime number. Let $Z(G)_p$ be the set of $p$-elements of the center of $G$. Let $\chi$ be an irreducible character of $G$ with $\chi(1)$ prime to $p$ (\textsl{e.g.} $\chi$ is a height zero character of a block of maximal defect). Then $G'\cap Z(G)_p \subset \Ker(\chi)$. 

In particular there are no height zero spin characters in the principal block of $\tilde{\mathfrak{S}}_{2w}$ or $\tilde{\mathfrak{A}}_{2w}$ for $w\geq 2$. 
\end{lem}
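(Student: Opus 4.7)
The plan is to combine Schur's lemma with the fact that any linear character of $G$ is trivial on the derived subgroup. Let $\rho : G \to \GL(V)$ afford $\chi$ and take $z \in G' \cap Z(G)_p$. Schur's lemma gives $\rho(z) = \omega(z)\,\ID_V$ for some scalar $\omega(z)$, so $\chi(z) = \omega(z)\chi(1)$. The map $\det \circ \rho$ is a linear character of $G$, hence trivial on $G'$, so $\omega(z)^{\chi(1)} = \det(\rho(z)) = 1$. Since $z$ is a $p$-element, $\omega(z)$ is a root of unity of $p$-power order; combined with $\gcd(\chi(1), p) = 1$ this forces $\omega(z) = 1$, that is, $\chi(z) = \chi(1)$ and $z \in \Ker(\chi)$.

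For the application, the principal block $B_0$ has maximal defect, so any $\chi \in \Irr_0(B_0)$ has $\chi(1)$ odd and the lemma applies with $p = 2$. It is therefore enough to prove that $Z = \{1, z\}$ lies in $G' \cap Z(G)_2$ when $G$ is either $\tilde{\mathfrak{S}}_{2w}$ or $\tilde{\mathfrak{A}}_{2w}$ and $w \geq 2$; then every such $\chi$ satisfies $Z \subset \Ker(\chi)$ and so is not a spin character. For $G = \tilde{\mathfrak{S}}_{2w}$ with $2w \geq 4$, this is the recalled identity $G' = \tilde{\mathfrak{A}}_{2w}$, which contains $Z$. For $G = \tilde{\mathfrak{A}}_{2w}$ with $2w \geq 8$, $G$ is the universal covering group of the perfect group $\mathfrak{A}_{2w}$ and is itself perfect, so $G' = G \supset Z$. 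The small cases $2w \in \{4, 6\}$ are treated by direct inspection: $\tilde{\mathfrak{A}}_6 = 2.\mathfrak{A}_6$ is a perfect central extension of the simple group $\mathfrak{A}_6$, and $\tilde{\mathfrak{A}}_4$ is the binary tetrahedral group whose derived subgroup is the quaternion group $Q_8$, which contains $Z$.

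There is no real obstacle here: the lemma reduces to the one-line identity $\omega(z)^{\chi(1)} = 1$ together with a coprimality argument, and the application only requires the elementary fact $Z \subset (\tilde{\mathfrak{A}}_{2w})'$ for $w \geq 2$. The only mildly non-automatic step is the handling of the two small cases $2w \in \{4, 6\}$ that fall outside the ``Schur cover of a perfect group'' framework, and in both cases the derived subgroup is known explicitly.
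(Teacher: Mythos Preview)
Your proof is correct and follows the same approach as the paper: the determinant together with Schur's lemma for the first part, and the containment $Z \subset G'$ for the application. You are in fact more thorough than the paper, which only records $(\tilde{\mathfrak{S}}_{2w})' = \tilde{\mathfrak{A}}_{2w} \supset Z$ and leaves the verification of $Z \subset (\tilde{\mathfrak{A}}_{2w})'$ (including the small cases $2w \in \{4,6\}$) to the reader.
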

\begin{proof}
  The first part uses the determinant and the fact that central elements acts diagonally. The second part follows because the derived subgroup of $\tilde{\mathfrak{S}}_{2w}$ is $\tilde{\mathfrak{A}}_{2w}$  and the latter always contains $Z$.
\end{proof}

\subsection{General case}
\label{sec:general-case}

We have in general a height preserving bijection between the irreducible characters of a block $B$ in $\mathfrak{S}_n$ of weight $w$ and irreducible characters in the principal block $B_0$ of $\mathfrak{S}_{2w}$, and more generally between two blocks of same weight (see \cite{Eng}, note that this is true in any characteristic). Bessenrodt and Olsson extended this bijection to blocks of $\tilde{\mathfrak{S}}_{n}$ in \cite{BeOl1} and proved that it is height preserving in \cite{BeOl2} (this is a characteristic $2$ feature). This requires some theory of the combinatorics lying behind the representation theory of $\tilde{\mathfrak{S}}_n$ in characteristic $2$ that will not be necessary to expose. Now the main point here is to show that these bijections can be "restricted" to characters of blocks of $\tilde{\mathfrak{A}}_n$ and $\tilde{\mathfrak{A}}_{2w}$.
\begin{theorem}
  Let $\tilde{B}$ be a block of $\tilde{\mathfrak{A}}_n$ of weight $w$, and $\tilde{B}_0$ be the principal block of $\tilde{\mathfrak{A}}_{2w}$. Then there exists a height preserving bijection between characters of $\tilde{B}$ and of $\tilde{B}_0$. This bijection can be chosen to send spin characters to spin characters and non spin characters to non spin characters.
\end{theorem}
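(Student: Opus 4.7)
The plan is to descend the Bessenrodt--Olsson bijection from the symmetric double covers to the alternating double covers via Clifford theory. Let $\tilde{B}_S$ be the unique block of $\tilde{\mathfrak{S}}_n$ covering $\tilde{B}$ and $\tilde{B}_0^S$ the principal block of $\tilde{\mathfrak{S}}_{2w}$. By \cite{BeOl2} there exists a height-preserving bijection $\Phi\colon \Irr(\tilde{B}_S) \to \Irr(\tilde{B}_0^S)$ sending spin to spin and non spin to non spin; it is built by fixing the (bar) $2$-quotient of the labelling (bar) partition and replacing the (bar) $2$-core with the empty one.

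The heart of the argument is to verify that $\Phi$ commutes with the outer involution $\sigma$ induced by the quotient $\tilde{\mathfrak{S}}_n/\tilde{\mathfrak{A}}_n$. A character $\chi \in \Irr(\tilde{B}_S)$ is $\sigma$-stable precisely when $\Res_{\tilde{\mathfrak{A}}_n}^{\tilde{\mathfrak{S}}_n}(\chi)$ splits into two irreducible constituents; as recalled in the excerpt, this corresponds on the label side to self-conjugate partitions in the non spin case and to \emph{even} bar partitions in the spin case. In the non spin case I would use that all $2$-cores of blocks of $\mathfrak{S}_n$ are self-conjugate staircases, and that conjugation of partitions acts on the $2$-quotient by an involution independent of the core, so that $\Phi$, which modifies only the core, automatically preserves self-conjugacy. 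In the spin case the analogous statement concerns bar partitions: the parity of $\lambda \in \mathscr{D}(n)$ depends on the bar $2$-quotient together with a contribution from the staircase bar core, and one must check that the two contributions cancel when passing from $\tilde{B}_S$ to $\tilde{B}_0^S$. This is the delicate combinatorial input; it is essentially the statement which makes the character counts on the two sides of the hoped-for alternating bijection agree, and it can be read off from the framework of \cite{BeOl1,BeOl2}.

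Once $\sigma$-compatibility of $\Phi$ is in hand, I would produce $\Psi\colon \Irr(\tilde{B}) \to \Irr(\tilde{B}_0)$ by Clifford correspondence. For each $\sigma$-stable $\chi \in \Irr(\tilde{B}_S)$ with $\Res(\chi) = \chi^+ + \chi^-$, fix once and for all a labelling of the two constituents of $\Res(\Phi(\chi))$ and set $\Psi(\chi^{\pm}) = \Phi(\chi)^{\pm}$. For each $\sigma$-orbit $\{\chi, \chi^\sigma\} \subseteq \Irr(\tilde{B}_S)$ sharing a common irreducible restriction $\tilde{\chi} \in \Irr(\tilde{B})$, set $\Psi(\tilde{\chi})$ equal to the common restriction of $\Phi(\chi)$ and $\Phi(\chi^\sigma)$, well defined by $\sigma$-equivariance. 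The spin/non spin distinction is preserved since it is detected by whether $Z \subseteq \Ker(\chi)$, a property unchanged by restriction to $\tilde{\mathfrak{A}}_n$. Heights are preserved because the degree-halving that occurs when a $\sigma$-stable character restricts is exactly compensated by the drop of $v_2([\tilde{\mathfrak{S}}_n:\tilde{D}_S]) - v_2([\tilde{\mathfrak{A}}_n:\tilde{D}_A])$ dictated by lemma~\ref{lemdom}, and the same accounting holds on the $\tilde{\mathfrak{A}}_{2w}$ side, so the shifts cancel through $\Phi$. The main obstacle in the whole scheme is the $\sigma$-compatibility of $\Phi$ on the spin part, i.e.\ the combinatorial assertion that the BO bijection preserves the parity of bar partitions; everything else is then a routine bookkeeping via Clifford theory.
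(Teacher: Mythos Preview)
Your proposal is correct and follows essentially the same route as the paper: both arguments hinge on the fact that the Enguehard/Bessenrodt--Olsson bijection on the $\tilde{\mathfrak{S}}_n$ level commutes with partition conjugation in the non-spin case (the paper cites \cite{Ols}, Proposition~3.5) and is sign-preserving on bar partitions in the spin case (the paper cites \cite{BeOl1}, Proposition~3.3), which is exactly your ``$\sigma$-compatibility of $\Phi$''. Your Clifford-theoretic descent and height bookkeeping are a more explicit rendering of what the paper does in one sentence; the ``delicate combinatorial input'' you flag is not something to be reproved but is precisely Proposition~3.3 of \cite{BeOl1}.
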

\begin{proof}
The bijection constructed by Enguehard in \cite{Eng} is a bijection between partitions of $n$ of weight $w$ with a given core, and partitions of $2w$ of empty core. It is given by associating partitions with same quotient (see \cite{Ols} for the details). Moreover, by proposition 3.5 in \cite{Ols}  and the fact that cores are self-conjugate, we have that this bijection commutes with conjugation on partitions. This means that associating partitions with same quotient gives a bijection between non spin characters of $\tilde{B}$ and of $\tilde{B}_0$. The fact that it is height preserving is true by observing that this operation gives a height preserving bijection between characters of blocks of $\mathfrak{S}_n$, and if a partition is self-conjugate then the height of its associated characters of $\mathfrak{A}_n$ is the same whereas it is increased by one otherwise.

To extend this bijection to spin characters, note that the bijection constructed in \cite{BeOl1} comes from an operation on bar partition that is sign preserving (proposition 3.3 in \cite{BeOl1}), giving thus a bijection between spin characters of $\tilde{B}$ and of $\tilde{B}_0$. The fact that it is height-preserving is justified as before using \cite{BeOl2}, theorem 3.1.
\end{proof}

\begin{rmk}
We now have a similar situation as the general one for symmetric groups in all characteristics : there is a height preserving bijection between blocks of same weight which comes from associating partitions with same quotient (the definition of which depending on the type of partition we are dealing with). The fact that this stays true for the alternating groups and their central extension is a characteristic $2$ feature, and the bijection should give rise to a perfect isometry. In fact blocks of alternating groups of same positive weight in characteristic $2$ have recently been proved to be perfectly isometric by O. Brunat and J.B. Gramain in \cite{BruGr}.
\end{rmk}

In the end we get the following by combining proposition 3.3 in \cite{BeOl1}, theorem 3.1 in \cite{BeOl2} and  the previous theorem : 
\begin{theorem}\label{heightpreserving}
  Let $\tilde{B}$ be a block of weight $w$ of $\tilde{\mathfrak{S}}_{n}$, and $\tilde{B}_0$ be the principal block of $\tilde{\mathfrak{S}}_{2w}$. Then there is a height preserving bijection between the characters of $\tilde{B}$ and those of $\tilde{B}_0$ which preserves the property of being a spin or a non spin character.

The same is true with $\tilde{\mathfrak{A}}_n$.
\end{theorem}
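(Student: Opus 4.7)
The plan is to assemble the bijection piecewise on the spin and non spin parts of $\Irr(\tilde{B})$, following the same template as the previous theorem but for $\tilde{\mathfrak{S}}_n$ (which is actually simpler, since there is no splitting of characters upon restriction to contend with). For the alternating case, the previous theorem already delivers exactly the statement, so there is nothing new to do.

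For the symmetric case, let $B$ be the block of $\mathfrak{S}_n$ dominated by $\tilde{B}$ (lemma \ref{lemdom}) and let $B_0$ be the principal block of $\mathfrak{S}_{2w}$, dominated by $\tilde{B}_0$. By lemma \ref{lemdom}, the non spin characters inside $\Irr(\tilde{B})$ are exactly $\Irr(B)$, and similarly the non spin characters inside $\Irr(\tilde{B}_0)$ are exactly $\Irr(B_0)$. Enguehard's construction in \cite{Eng}, which sends a partition to the partition with empty core and same quotient, yields a height preserving bijection $\Irr(B)\to\Irr(B_0)$, and this will be the non spin half of our map. Inflation along $\pi$ transports it into a bijection between the non spin characters of $\tilde{B}$ and those of $\tilde{B}_0$, and the heights are preserved because domination does not change character degrees (the projection being surjective with central kernel).

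For the spin half, I would invoke proposition 3.3 in \cite{BeOl1}, which produces a bijection between the spin characters of two blocks of the same weight of $\tilde{\mathfrak{S}}_{\bullet}$ via the analogous ``same quotient'' operation on bar partitions; combined with theorem 3.1 in \cite{BeOl2}, which is the characteristic $2$ statement that this bijection preserves heights, this yields a height preserving bijection between the spin characters of $\tilde{B}$ and those of $\tilde{B}_0$. Gluing the two bijections along the spin/non spin decomposition gives the desired map, which by construction sends spin to spin and non spin to non spin. The alternating statement is then just a restatement of the previous theorem.

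The actual substance of the argument lies in the cited results of Enguehard and of Bessenrodt--Olsson, and there is no real obstacle at this level beyond keeping the bookkeeping straight between the four groups $\mathfrak{S}_n$, $\mathfrak{A}_n$, $\tilde{\mathfrak{S}}_n$, $\tilde{\mathfrak{A}}_n$ and their blocks; the height preservation for the spin characters, which is the only genuinely characteristic $2$ phenomenon here, is already packaged into theorem 3.1 of \cite{BeOl2}.
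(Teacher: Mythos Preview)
Your proposal is correct and follows essentially the same route as the paper: the paper's proof is the single sentence ``combining proposition 3.3 in \cite{BeOl1}, theorem 3.1 in \cite{BeOl2} and the previous theorem,'' and you have simply unpacked this, using Enguehard's bijection (via lemma \ref{lemdom}) for the non spin part of the $\tilde{\mathfrak{S}}_n$ case, the Bessenrodt--Olsson results for the spin part, and the previous theorem verbatim for $\tilde{\mathfrak{A}}_n$. Your remark that inflation along $\pi$ preserves heights is the only point not made explicit in the paper, and it is correct since both $|\tilde{G}|$ and the defect pick up one extra factor of $2$.
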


Now we have the number of spin characters of height zero in a given block :

\begin{prop}\label{heightzerobis}
  Let $\tilde{B}$ be a block of weight $w$ of $\tilde{\mathfrak{S}}_{n}$. Then the number of spin characters of height zero in $\tilde{B}$ is :
    \begin{itemize} 
   \item $1$ if $\tilde{B}$ is of weight $0$ 
   \item $2$ if $\tilde{B}$ is of weight $1$ 
   \item $0$ otherwise
  \end{itemize}
  Let $\tilde{B}$ be a block of weight $w$ of $\tilde{\mathfrak{A}}_{n}$. Then the number of spin characters of height zero in $\tilde{B}$ is :
    \begin{itemize} 
   \item $1$ if $\tilde{B}$ is of weight $0$ or $1$ 
   \item $0$ otherwise
  \end{itemize}

\end{prop}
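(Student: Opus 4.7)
The plan is to apply Theorem \ref{heightpreserving} to reduce the question to counting spin height-zero characters in a principal block of $\tilde{\mathfrak{S}}_{2w}$ or $\tilde{\mathfrak{A}}_{2w}$, and then to split into the three cases $w=0$, $w=1$ and $w\geq 2$.

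For $w=0$ I would not invoke Theorem \ref{heightpreserving} but argue directly. By Lemma \ref{lemdom}, $\tilde{B}$ dominates a defect-zero block of $\mathfrak{S}_n$ (resp.\ $\mathfrak{A}_n$) and therefore has a defect group of order $2$. So $\tilde{B}$ has cyclic defect of order $2$ and contains exactly two ordinary characters, both of height zero. One of them is inflated from the unique character of the dominated defect-zero block and is non-spin; by the remark following Lemma \ref{lemdom} (whose proof transposes verbatim to $\tilde{\mathfrak{A}}_n$) $\tilde{B}$ must contain at least one spin character, so the second character is a spin character and the count is $1$.

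For $w=1$, Theorem \ref{heightpreserving} reduces the count to the principal block of $\tilde{\mathfrak{S}}_2\simeq \mathbb{Z}/4\mathbb{Z}$, respectively $\tilde{\mathfrak{A}}_2=Z\simeq \mathbb{Z}/2\mathbb{Z}$. Being $2$-groups, all their irreducible characters lie in the (unique) principal block and are of height zero, and a direct count of the linear characters that are nontrivial on $Z$ gives $2$ spin characters for $\tilde{\mathfrak{S}}_2$ and $1$ for $\tilde{\mathfrak{A}}_2$.

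For $w\geq 2$, Theorem \ref{heightpreserving} reduces to the principal block of $\tilde{\mathfrak{S}}_{2w}$ (resp.\ $\tilde{\mathfrak{A}}_{2w}$), and Lemma \ref{simplelem} will yield the vanishing once $Z$ is seen to lie in the derived subgroup of the ambient group. In the symmetric case this is immediate from $\tilde{\mathfrak{S}}_{2w}'=\tilde{\mathfrak{A}}_{2w}\supseteq Z$. In the alternating case, which is the only genuinely delicate point, one distinguishes between $2w\geq 5$, where $\tilde{\mathfrak{A}}_{2w}$ is perfect as a central $2$-cover of the simple group $\mathfrak{A}_{2w}$, and the exceptional value $2w=4$, where a direct check on the binary tetrahedral group $\tilde{\mathfrak{A}}_4$ identifies its derived subgroup with the quaternion group $Q_8$, which still contains $Z$. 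Lemma \ref{simplelem} then applies and gives a count of $0$.
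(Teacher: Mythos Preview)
Your proof is correct and follows essentially the same strategy as the paper: reduce via Theorem \ref{heightpreserving} to the principal block of $\tilde{\mathfrak{S}}_{2w}$ or $\tilde{\mathfrak{A}}_{2w}$, then treat $w=0$, $w=1$, $w\geq 2$ separately. The only differences are cosmetic: for $w=0$ the paper simply invokes $\tilde{\mathfrak{S}}_0=\tilde{\mathfrak{A}}_0=\mathbb{Z}/2\mathbb{Z}$ rather than your direct central-defect argument, and for $w\geq 2$ the paper packages the alternating case into the ``in particular'' clause of Lemma \ref{simplelem} without spelling out, as you do, that $Z\subseteq\tilde{\mathfrak{A}}_{2w}'$ needs a separate check at $2w=4$.
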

\begin{proof}
We use theorem \ref{heightpreserving}.

If $w\geq 2$ then there are no spin characters of height zero in the principal block of $\tilde{\mathfrak{S}}_{2w}$ or of $\tilde{\mathfrak{A}}_{2w}$ by lemma \ref{simplelem}.

If $w=1$ then $\tilde{\mathfrak{S}}_{2w}=\mathbb{Z}/4\mathbb{Z}$ and $\tilde{\mathfrak{A}}_{2w}=\mathbb{Z}/2\mathbb{Z}$ and the result follows. 

If $w=0$ the result is obvious because $\tilde{\mathfrak{S}}_0=\tilde{\mathfrak{A}}_0=\mathbb{Z}/2\mathbb{Z}$.
\end{proof}

\begin{rmk}
  The case of central defect group is an easy case that can be dealt with using theorem 8.14 in \cite{NT}.
\end{rmk}

\section{Height zero spin characters in $\tilde{b}$}

In this section we calculate the number of height zero spin characters in a block of the normalizer of a given defect group of a group $G \in \{\tilde{\mathfrak{S}}_n, \tilde{\mathfrak{A}}_n\}$. For the reader's convenience we will first recall some lemmas dealing with covering of blocks, domination and the Brauer correspondence. The case $G=\tilde{\mathfrak{S}}_n$ is dealt with using the structure of the normalizer of a defect group of a symmetric group and the nice relationships between domination of blocks and the Brauer correspondence (see lemma \ref{brauerquotient}). The case $G=\tilde{\mathfrak{A}}_n$ is dealt with using more technical results as the normalizer is not so nice. But it is seized between two nice enough groups and gives a situation where we have three groups each of index two in the next, and we conclude using the relationships between the Brauer correspondence, the covering of blocks and the domination of blocks (see lemmas \ref{covering} and \ref{brauerquotient}).

\subsection{Block covering, domination of blocks}
\label{sec:block-cover-domin}

\begin{lem}\label{covering}
  Let $G$ be a finite group. Let $H$ be a normal subgroup of $G$ such that $G/H$ is a $2$-group of order $2^{m}$ and let $b$ be a block of $H$ with defect group $D$ of order $2^{d}$. Then :
 \begin{itemize}
  \item there exists a unique block $B$ of $G$ covering $b$.
  \item  $G$ acts transitively on the defect groups of $b$ so that $[N_{G}(D):N_{H}(D)]=2^{m}$
  \end{itemize}
Moreover if $B$ is of numerical defect $e$, then $e=d+m$ if and only if $b$ and $B$ correspond to equal block idempotents.
\end{lem}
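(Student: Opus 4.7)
The plan is to prove the three assertions successively, each resting on results from \cite{NT} Chapter 5. For the existence and uniqueness of the covering block $B$, the case $m=1$ is exactly Corollary 5.6 of \cite{NT}, already invoked above for $(\mathfrak{S}_n,\mathfrak{A}_n)$. The general case follows by induction on $m$: since $G/H$ is a $2$-group, it admits a chain of normal subgroups $H=H_0 \triangleleft H_1 \triangleleft \cdots \triangleleft H_m=G$ with $[H_{i+1}:H_i]=2$, and iterating the $m=1$ result produces a chain of uniquely determined covering blocks $b=b_0,b_1,\dots,b_m=B$. Any block of $G$ covering $b$ must coincide with $B$ by the uniqueness at each step of the chain.

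For the second assertion, I will reduce the index identity $[N_G(D):N_H(D)]=2^m$ to the Frattini-type equality $G = H\cdot N_G(D)$; given the latter, the second isomorphism theorem yields
\[
[G:H] = [HN_G(D):H] = [N_G(D):H\cap N_G(D)] = [N_G(D):N_H(D)].
\]
The equality $G = H\cdot N_G(D)$ is equivalent to the statement that every $G$-conjugate of $D$ already lies in the $H$-conjugacy class of $D$. This I plan to derive from the unique covering block $B$ of part (1). Choosing a defect group $\hat D$ of $B$ with $\hat D \cap H = D$ (a standard choice, \cite{NT} Chapter 5), and using both the transitive $G$-action on defect groups of $B$ and the identification of $\hat D \cap H$ with a defect group of some block of $H$ covered by $B$, one pins down every $G$-conjugate of $D$ inside the $H$-class of $D$.

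For the equivalence $e=d+m \iff e_b=e_B$: in one direction, if $e_b=e_B$ then $b$ is $G$-stable and Clifford theory produces an extension of each height-zero character of $b$ to a character of $B$ of the same dimension, yielding $e=d+m$ by a direct $2$-part computation. Conversely, if $e=d+m$, a defect group $D_B$ of $B$ has order $2^{d+m}$, and the inequalities $|D_B\cap H|\le 2^d$ and $|D_B H/H|\le 2^m$ must both be equalities; in particular $D_B H = G$, and a short argument shows this forces the orbit of $b$ under $G$ to be trivial, so the orbit-sum decomposition $e_B = \sum_g e_{b^g}$ provided by part (1) collapses to $e_b=e_B$.

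The most delicate step is the transitivity in part (2) when $b$ is not $G$-stable (which can occur, e.g.\ for defect-zero blocks whose $G$-orbit is non-trivial): one must essentially invoke that all blocks in the $G$-orbit of $b$ are covered by the single $B$ of part (1), whose defect groups form one $G$-conjugacy class of $2$-subgroups of $H$, to conclude that the $G$-conjugacy class of $D$ does not spread beyond its $H$-class.
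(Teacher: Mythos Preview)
Your treatments of the first item and of the ``moreover'' clause are sound and, though more explicit than the paper's bare citation of \cite{NT}, rest on the same standard facts (unique covering when the quotient is a $p$-group, and the orbit-sum description $e_B=\sum_{g} e_{b^g}$ of the covering idempotent). The problem is the second item.

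You correctly reduce it to $G=H\,N_G(D)$ and correctly flag the non-$G$-stable case as the delicate point, but your proposed resolution cannot work, because the assertion is in fact \emph{false} without $G$-stability. Take $p=2$, $H=S_3\times S_3$, $G=S_3\wr C_2$, and let $b=B_0\otimes B_1$ be the tensor of the principal $2$-block of the first factor with the defect-zero block of the second. A defect group is $D=C_2\times 1$; the swap $\tau$ sends $D$ to $1\times C_2$, which is not $H$-conjugate to $D$, so $N_G(D)=N_H(D)$ and $[N_G(D):N_H(D)]=1\neq 2=[G:H]$. (Here the inertia group of $b$ is $H$, and Fong--Reynolds gives $B$ defect group of order $2$, so $e=1\neq d+m=2$, consistent with the ``moreover''.) Thus your claim that ``the $G$-conjugacy class of $D$ does not spread beyond its $H$-class'' is simply wrong in general. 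The index identity holds precisely when $b$ is $G$-stable (equivalently $e=d+m$, equivalently $e_b=e_B$), and this is exactly how the lemma is applied later in the paper: the blocks of $\tilde{\mathfrak{A}}_n$ to which it is applied are $\tilde{\mathfrak{S}}_n$-stable. Under that hypothesis your argument for the second item is immediate ($b^g=b$ forces $D^g$ to be $H$-conjugate to $D$). So the correct fix is to add $G$-stability as a hypothesis for the index formula, not to search for a proof in the unstable case.
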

\begin{proof}
  This is a direct consequence of corollary 5.6 and theorem 5.16 of \cite{NT}.
\end{proof}

Let us give a general lemma relating blocks of a group and of a $2$-central extension.
\begin{lem}\label{brauerquotient}
 Let $\tilde{G}$ be a finite group with center $Z$, a $2$-group. Define $\pi : \tilde{G} \rightarrow G=\tilde{G}/Z$, the canonical quotient map.

Then there is a bijection given by domination of blocks between the blocks of $\tilde{G}$ and the blocks of $G$. Moreover if $\tilde{B}$ is a block with defect $\tilde{D}$ of $\tilde{G}$ dominating $B$, then $B$ has defect group $D=\pi(\tilde{D})$ and domination gives a bijection between blocks of $N_{\tilde{G}}(\tilde{D})$ and of $N_{G}(D)$. If $\tilde{b}$ and $b$ are the Brauer correspondents of $\tilde{B}$ and $B$, then we have that $\tilde{b}$ is the block of $N_{\tilde{G}}(\tilde{D})$ dominating $b$ in $N_{G}(D)$.
\end{lem}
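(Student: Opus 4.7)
The plan is to establish the three assertions of the lemma in sequence, with most of the work concentrated on the last.

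For the bijection between blocks, I would use that $\pi$ induces an algebra epimorphism $k\tilde{G}\twoheadrightarrow kG$ (over an algebraically closed field $k$ of characteristic $2$), whose kernel is the ideal generated by $\{z-1\mid z\in Z\}$. Because $Z$ is a $2$-group, this kernel is nilpotent, hence $\pi$ lifts primitive idempotents uniquely. It therefore induces a bijection between primitive idempotents of $Z(k\tilde{G})$ and of $Z(kG)$, which is exactly the bijection between blocks of $\tilde{G}$ and $G$ given by domination.

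For the defect group and normalizer correspondence, the key observation is that a central $2$-subgroup of $\tilde{G}$ is contained in every defect group of every block (a standard consequence of the theory in \cite{NT}), so $Z\subset\tilde{D}$. Thus $D:=\pi(\tilde{D})$ has order $|\tilde{D}|/|Z|$, and a short argument combining the description of defect groups via block idempotents modulo the radical with the first step shows $D$ is indeed a defect group of $B$. The identity $N_{\tilde{G}}(\tilde{D})/Z\cong N_G(D)$ is then a routine correspondence-theorem computation: an element of $\tilde{G}$ normalizing $\tilde{D}$ visibly projects to $N_G(D)$, and any lift of an element of $N_G(D)$ normalizes $\pi^{-1}(D)=\tilde{D}$ since $Z\subset\tilde{D}$. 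Applying the first step inside the subgroup $N_{\tilde{G}}(\tilde{D})$, whose center still contains the $2$-group $Z$, produces the claimed bijection between blocks of $N_{\tilde{G}}(\tilde{D})$ and blocks of $N_G(D)$ via domination.

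The main obstacle is the compatibility with the Brauer correspondence. I would handle it by verifying that the Brauer homomorphism commutes with $\pi$. Since $Z\subset C_{\tilde{G}}(\tilde{D})$, the map $\pi$ restricts to a surjection $C_{\tilde{G}}(\tilde{D})\twoheadrightarrow C_G(D)$ with kernel $Z$, and the induced algebra maps fit into a commutative square together with the Brauer homomorphisms $\Br_{\tilde{D}}:Z(k\tilde{G})\to Z(kC_{\tilde{G}}(\tilde{D}))$ and $\Br_D:Z(kG)\to Z(kC_G(D))$. Applying this square to the block idempotent $e_{\tilde{B}}$, which maps to $e_B$, one sees that the image of the Brauer correspondent idempotent $e_{\tilde{b}}$ is a primitive idempotent $e_{b'}$ of $Z(kN_G(D))$ satisfying $\Br_D(e_B)e_{b'}=e_{b'}$. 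By uniqueness of the Brauer correspondent, $b'=b$, so the block of $N_G(D)$ dominated by $\tilde{b}$ is indeed the Brauer correspondent $b$ of $B$, as required.
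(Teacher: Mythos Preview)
Your proof is correct and follows essentially the same route as the paper's: the paper cites \cite{NT}, Theorems 8.10 and 8.11, for the block bijection and defect-group statements, and then proves the compatibility with Brauer correspondence via the same commutativity $\Br_D\circ\mu_Z^{*}=\mu_Z^{*}\circ\Br_{\tilde{D}}$ that you use, only phrased in terms of the central characters $w_{\tilde{b}},w_b$ rather than block idempotents. The one cosmetic slip is that your target of $\Br_{\tilde{D}}$ should be $Z(kN_{\tilde{G}}(\tilde{D}))$ rather than $Z(kC_{\tilde{G}}(\tilde{D}))$ for the idempotent argument to go through cleanly, but this is a standard identification and does not affect the validity of the argument.
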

\begin{proof}
The first part is readily obtained from theorems 8.10 and 8.11 of \cite{NT}. For the second part let us borrow the notations from \cite{NT}. Then by lemma 8.5  and theorem 8.11 of \cite{NT},  we need to prove that $w_{\tilde{b}}=w_{b}\circ \mu_{Z}^{*}$. The hypothesis gives $w_{\tilde{b}}\circ Br_{\tilde{D}}=w_{b}\circ Br_{D}\circ \mu_{Z}^{*}$. But $Br_{D}\circ \mu_{Z}^{*}=\mu_{Z}^{*}\circ Br_{\tilde{D}}$. Now again using theorem 8.11 for normalizers with corollary 1.12 and lemma 2.13 (ii) of \cite{NT} we get the result.
\end{proof}

\subsection{Calculation for normalizers in $\tilde{\mathfrak{S}}_n$}

This subsection is devoted to calculating the number of height zero spin characters in the Brauer correspondent of a given block $\tilde{B}$ of $\tilde{\mathfrak{S}}_{n}$. The idea is to reduce the problem to a calculation within a principal block as before. Technically the proof will go as follows : we will deal with the case $w=0$, and then if $w\geq 1$  we will deal first with the case $w=2n$ and use it to treat the cases $n-2w>0$.

The simple case of blocks of weight $0$ have been settled before : it is the case of blocks with defect group $Z$. 

Now let $\tilde{B}_{S}$ be a block of weight $w\geq 1$ of $\tilde{\mathfrak{S}}_{2w}$ (this is necessarily the principal block). Let $\tilde{D}$ be a defect group of $\tilde{B}_{S}$, that is a Sylow subgroup of $\tilde{\mathfrak{S}}_{2w}$.

\begin{lem}\label{particularcase}
The group $\tilde{N}_{\tilde{\mathfrak{S}}_{2w}}(\tilde{D})$ has only one block.

If $w\geq 2$ then this block has no spin characters of height zero. 

If $w=1$ then all its characters are of height zero, two of which being spin characters.
\end{lem}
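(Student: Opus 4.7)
The plan is to reduce the whole lemma to a purely $2$-group calculation by first showing $N_{\tilde{\mathfrak{S}}_{2w}}(\tilde{D}) = \tilde{D}$. For this I would apply the projection $\pi\colon\tilde{\mathfrak{S}}_{2w}\to\mathfrak{S}_{2w}$: the image $D=\pi(\tilde{D})$ is a Sylow $2$-subgroup of $\mathfrak{S}_{2w}$, and Sylow $2$-subgroups of symmetric groups are self-normalizing (a standard consequence of the iterated wreath-product description $D\cong\prod_i(C_2\wr\cdots\wr C_2)$). Since $\ker\pi=Z\subseteq\tilde{D}$, we get $|N_{\tilde{\mathfrak{S}}_{2w}}(\tilde{D})|\leq 2|D|=|\tilde{D}|$, and the reverse inclusion is trivial, so equality holds. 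In particular $N_{\tilde{\mathfrak{S}}_{2w}}(\tilde{D})$ is a $2$-group and therefore has a single $2$-block, giving the first assertion. The height zero characters of this block are exactly the linear characters of $\tilde{D}$, i.e.\ those inflated from $\tilde{D}/\tilde{D}'$, which reduces the remaining statements to understanding whether $z\in\tilde{D}'$.

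For $w\geq 2$, my aim is to show $z\in\tilde{D}'$; then every linear character of $\tilde{D}$ is trivial on $Z$, hence non-spin, giving the stated absence of spin characters of height zero. Since $2w\geq 4$, after replacing $\tilde{D}$ by a Sylow conjugate we may assume $D$ contains the two disjoint transpositions $(1,2)$ and $(3,4)$, and we pick lifts $\tilde\sigma_1,\tilde\sigma_2\in\tilde{D}$ of them. Their commutator lies in $\ker\pi=Z$ and is independent of the choice of lifts since $Z$ is central; a direct use of the defining relations of $\tilde{\mathfrak{S}}_{2w}$ on the non-adjacent generators $t_1$ and $t_3$ shows that this commutator is $z$ rather than $1$. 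This is the one genuine computation in the argument, and it yields $z\in\tilde{D}'$ for all Sylows by conjugation-invariance.

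For $w=1$ there is nothing to do beyond inspection: $\tilde{\mathfrak{S}}_2=\langle t_1\rangle$ is cyclic of order $4$, hence abelian and equal to its own Sylow $2$-subgroup $\tilde{D}$. All four of its irreducible characters are linear and therefore of height zero; the two faithful ones (sending $t_1$ to a primitive fourth root of unity) are the spin characters, while the other two are inflated from $\mathfrak{S}_2\cong\tilde{\mathfrak{S}}_2/Z$, giving the claim.

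The only real obstacle is the commutator identity $[\tilde\sigma_1,\tilde\sigma_2]=z$ in the case $w\geq 2$; every other step follows either from general principles (self-normalization of Sylows of symmetric groups; a $p$-group having a unique $p$-block, whose height zero characters are exactly its linear characters) or from direct inspection of a group of order $4$.
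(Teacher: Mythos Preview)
Your proof is correct. For $w\geq 3$ and $w=1$ it coincides with the paper's argument. At $w=2$ the approaches diverge: the paper treats this case separately, identifying the normaliser with $\tilde{\mathfrak{S}}_4=\GL_2(3)$ and applying Lemma~\ref{simplelem} to that group, whereas you fold $w=2$ into the general case by using that Sylow $2$-subgroups of $\mathfrak{S}_{2w}$ are self-normalising for every $w$. Your uniform treatment is cleaner, and in fact the paper's normaliser identification at $w=2$ is not right (the Sylow $2$-subgroup of $\mathfrak{S}_4$ is already self-normalising, so $N_{\tilde{\mathfrak{S}}_4}(\tilde D)=\tilde D$), though the conclusion survives either way since Lemma~\ref{simplelem} applies equally well to $\GL_2(3)$. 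You also make explicit, via the commutator of lifts of the disjoint transpositions $(1,2)$ and $(3,4)$, the inclusion $z\in\tilde D'$ that is needed for the linear characters of $\tilde D$ to be non-spin; the paper leaves this step implicit in its appeal to Lemma~\ref{simplelem}, and bare non-abelianness of $\tilde D$ does not by itself force a given central involution into the commutator subgroup, so your explicit check is the more convincing justification.
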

\begin{proof}
If $w\geq 3$ then we have $\tilde{N}_{\tilde{\mathfrak{S}}_{2w}}(\tilde{D})=\tilde{D}$ (see \cite{MichOl} \S 3) which has only one block, and no spin characters of height zero because of lemma \ref{simplelem} ($\tilde{D}$ is a non abelian $2$-group and $Z$ is a simple group contained in $\tilde{D}$  (see theorem 2.8 of \cite{NT})).

If $w=2$ then $\tilde{N}_{\tilde{\mathfrak{S}}_{2w}}(\tilde{D})=\tilde{\mathfrak{S}}_{4}=\GL_2(3)$ which has only one block and no spin characters of height zero because of lemma \ref{simplelem}  (or by looking at the character table).

If $w=1$ then $\tilde{\mathfrak{S}}_{2w}=\tilde{N}_{\tilde{\mathfrak{S}}_{2w}}(\tilde{D})=\mathbb{Z}/4\mathbb{Z}$ and the statement is clear.
\end{proof}

Now let $\tilde{B}_{S}$ be a block of $\tilde{\mathfrak{S}}_n$ with defect group $\tilde{D}$ and weight $w\geq 1$ (we are not in the case of central defect) such that $n-2w>0$, dominating the block $B_{S}$ with defect $D$. Let $\tilde{b}_{S}$ be the Brauer correspondent of $\tilde{B}_{S}$ in $N_{\tilde{\mathfrak{S}}_n}(\tilde{D})$. We have $N_{\mathfrak{S}_n}(D)=N_{\mathfrak{S}_{2w}}(D)\times \mathfrak{S}_{n-2w}$, and the Brauer correspondent of $B_{S}$ is $b_{S}=b_0\times b_{1}$, the product of the principal block $b_0$ of $N_{\mathfrak{S}_n}(D)$ and of the unique block $b_{1}$ of numerical defect $0$ (and thus of weight $0$) in $\mathfrak{S}_{n-2w}$ (see the proof of lemma 11.3 of \cite{Ols} and \S 5.2 in \cite{CabEn}).

\begin{lem}
  Let $\tilde{b}_0$ be the principal block of $N_{\tilde{\mathfrak{S}}_{2w}}(\tilde{D})$, and $\tilde{b}_{S}$ be the unique block of $\tilde{\mathfrak{S}}_{n-2w}$ that dominates $b_{S}$. We have that $\tilde{b}_{1}$ is a block with defect group $Z$.

 Then  $\tilde{b}_{S}$ is the block dominated by $\tilde{b}_0\times \tilde{b}_{1}$.
\end{lem}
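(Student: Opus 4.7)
The plan is to exploit transitivity of domination through a tower of quotients by central $2$-subgroups, reducing the problem to the already established bijections of lemma \ref{brauerquotient}. The first step is to pin down the structure of the normalizer as a central product
$$N_{\tilde{\mathfrak{S}}_n}(\tilde{D}) = N_{\tilde{\mathfrak{S}}_{2w}}(\tilde{D}) \stackrel{\sim}{\times} \tilde{\mathfrak{S}}_{n-2w}.$$
This follows by taking the preimage under $\pi$ of the decomposition $N_{\mathfrak{S}_n}(D) = N_{\mathfrak{S}_{2w}}(D) \times \mathfrak{S}_{n-2w}$ recalled before the lemma, and observing that the preimage in $\tilde{\mathfrak{S}}_n$ of $\mathfrak{S}_{2w} \times \mathfrak{S}_{n-2w}$ is the central product $\tilde{\mathfrak{S}}_{2w} \stackrel{\sim}{\times} \tilde{\mathfrak{S}}_{n-2w}$.

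The claim that $\tilde{b}_1$ has defect group $Z$ is immediate from lemma \ref{brauerquotient} applied to the central extension $\tilde{\mathfrak{S}}_{n-2w} \to \mathfrak{S}_{n-2w}$: since $b_1$ has trivial defect group, its unique dominating block $\tilde{b}_1$ has defect group $\pi^{-1}(\{1\}) = Z$.

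For the main identification, I would consider the two-step quotient
$$N_{\tilde{\mathfrak{S}}_{2w}}(\tilde{D}) \times \tilde{\mathfrak{S}}_{n-2w} \longrightarrow N_{\tilde{\mathfrak{S}}_n}(\tilde{D}) \longrightarrow N_{\mathfrak{S}_n}(D),$$
where the first arrow is the quotient by $\Delta Z$ and the second by the image of $Z$ in the central product. Each arrow is a quotient by a central $2$-subgroup, so at each step lemma \ref{brauerquotient} provides a domination bijection on blocks. Under this tower, $\tilde{b}_0 \times \tilde{b}_1$ dominates $b_0 \times b_1 = b_S$: indeed $\tilde{b}_0$ is the principal block and so dominates the principal block $b_0$ of $N_{\mathfrak{S}_{2w}}(D)$, while $\tilde{b}_1$ dominates $b_1$ by construction. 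Transitivity of domination then forces the unique block of $N_{\tilde{\mathfrak{S}}_n}(\tilde{D})$ dominated by $\tilde{b}_0 \times \tilde{b}_1$ to itself dominate $b_S$. Since by lemma \ref{brauerquotient} applied to $\tilde{\mathfrak{S}}_n \to \mathfrak{S}_n$ the Brauer correspondent $\tilde{b}_S$ is precisely the unique block of $N_{\tilde{\mathfrak{S}}_n}(\tilde{D})$ dominating $b_S$, the two blocks must coincide.

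The main obstacle is the transitivity step, since lemma \ref{brauerquotient} is formulated for quotients by the full center, whereas $\Delta Z$ is only a central subgroup of the direct product (whose full center is $Z \times Z$). I would handle this by rerunning the central-character computation $w_{\tilde{b}} = w_b \circ \mu_Z^{*}$ from the proof of lemma \ref{brauerquotient} with $\Delta Z$ in place of $Z$, then composing with the analogous identity for the second quotient, using the commutation $Br_D \circ \mu_Z^{*} = \mu_Z^{*} \circ Br_{\tilde{D}}$ to line up the two Brauer morphisms.
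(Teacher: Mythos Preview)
Your proposal is correct and follows essentially the same route as the paper: establish the central product decomposition of $N_{\tilde{\mathfrak{S}}_n}(\tilde{D})$, use lemma \ref{brauerquotient} to identify $\tilde{b}_S$ as the unique block dominating $b_S=b_0\times b_1$, observe that $\tilde{b}_0\times\tilde{b}_1$ also dominates $b_0\times b_1$, and conclude by uniqueness. You are in fact more careful than the paper in flagging that lemma \ref{brauerquotient} is literally stated for the quotient by the full center while $\Delta Z$ is only a proper central $2$-subgroup of the direct product; the paper simply invokes the lemma without comment, relying implicitly on the fact that the cited results from \cite{NT} hold for any central $p$-subgroup, which is exactly the fix you propose.
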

\begin{proof}
The block $\tilde{b}_{1}$ is of defect group $Z$ because of lemma \ref{brauerquotient}.

Then $N_{\tilde{\mathfrak{S}}_n}(\tilde{D})$ is the central product of $N_{\tilde{\mathfrak{S}}_{2w}}(\tilde{D})$ and $\tilde{\mathfrak{S}}_{n-2w}$ so that this lemma makes sense. Now lemma \ref{brauerquotient} tells us that $\tilde{b}_{S}$ is the block dominating $b_{S}=b_0\times b_{1}$. Moreover  $\tilde{b}_0\times \tilde{b}_{1}$ dominates $b_0 \times b_{1}$, so  we can conclude that indeed $\tilde{b}_0\times \tilde{b}_{1}$ dominates $\tilde{b}_{S}$ by lemma \ref{brauerquotient}.
\end{proof}

This discussion leads to the following lemma :
\begin{lem}\label{caseofc}
 Let $w,n$ be integers such that $w\geq 1$ and $n-2w>0$. Let $\tilde{B}_{S}$ be a block of weight $w$ of $\tilde{\mathfrak{S}}_{n}$, with defect group $\tilde{D}$. Let $\tilde{b}_{S}$ be the Brauer correspondent of $\tilde{B}_{S}$ in $N_{\tilde{\mathfrak{S}}_n}(\tilde{D})$. If $w\geq 2$ then all height zero characters of $\tilde{b}_{S}$ are non spin. If $w=1$ then $\tilde{b}_{S}$ has two spin characters of height zero.
 
\end{lem}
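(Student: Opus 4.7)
The plan is to exploit the central product decomposition $N_{\tilde{\mathfrak{S}}_n}(\tilde{D}) = N_{\tilde{\mathfrak{S}}_{2w}}(\tilde{D}) \stackrel{\sim}{\times} \tilde{\mathfrak{S}}_{n-2w}$ together with the preceding lemma, which identifies $\tilde{b}_{S}$ as the block of this central product dominated by $\tilde{b}_0 \times \tilde{b}_1$, where $\tilde{b}_0$ is the principal block of $N_{\tilde{\mathfrak{S}}_{2w}}(\tilde{D})$ and $\tilde{b}_1$ is the block of $\tilde{\mathfrak{S}}_{n-2w}$ with central defect group $Z$ dominating the defect zero block $b_1$. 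The goal is to parameterize $\Irr(\tilde{b}_{S})$ by pairs $(\chi_0,\chi_1) \in \Irr(\tilde{b}_0) \times \Irr(\tilde{b}_1)$ and then count factor by factor.

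First I would identify $\Irr(\tilde{b}_S)$ with those $\chi_0 \otimes \chi_1 \in \Irr(\tilde{b}_0 \times \tilde{b}_1)$ that are trivial on $\Delta Z$, i.e.\ that satisfy $\chi_0(z)\chi_1(z) = \chi_0(1)\chi_1(1)$. Since $|Z| = 2$ the two central characters $\omega_{\chi_i}(z) \in \{\pm 1\}$ must coincide, so the admissible pairs are those with $\chi_0$ and $\chi_1$ both spin or both non spin. Writing the image of $z$ in the central product as $[(z,1)]$, the resulting character of $\tilde{b}_S$ evaluated there equals $\chi_0(z)\chi_1(1)$, whence the descended character is a spin character of $\tilde{b}_S$ exactly when both $\chi_0$ and $\chi_1$ are spin.

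Next I would check the additivity of heights. Using $|N_{\tilde{\mathfrak{S}}_{2w}}(\tilde{D}) \times \tilde{\mathfrak{S}}_{n-2w}|_2 = 2 \cdot |N_{\tilde{\mathfrak{S}}_n}(\tilde{D})|_2$ and the fact that the defect group $\tilde{D}_0 \times Z$ of $\tilde{b}_0 \times \tilde{b}_1$ has order $2|\tilde{D}|$, a direct $2$-adic valuation computation gives $h(\chi_0 \otimes \chi_1) = h(\chi_0) + h(\chi_1)$ whether computed in $\tilde{b}_0 \times \tilde{b}_1$ or in $\tilde{b}_S$. Consequently, height zero characters of $\tilde{b}_S$ correspond to pairs of height zero characters, and the number of height zero spin characters of $\tilde{b}_S$ equals the product of the numbers of height zero spin characters in $\tilde{b}_0$ and in $\tilde{b}_1$.

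It remains to count these in each factor. Lemma \ref{particularcase} handles $\tilde{b}_0$: there are $0$ height zero spin characters if $w \geq 2$ and $2$ if $w=1$. For $\tilde{b}_1$, the defect group is the central subgroup $Z$ of order $2$, so by the central defect case (theorem 8.14 in \cite{NT}, as noted in the remark above) $\tilde{b}_1$ has $|Z| = 2$ irreducible characters, all of height zero; one of them is the inflation of the unique character of the defect zero block $b_1$, hence is non spin, and therefore the remaining one is a spin character of height zero. Multiplying gives the claimed count, $0$ for $w \geq 2$ and $2 \cdot 1 = 2$ for $w = 1$. The one delicate point is the bookkeeping of the spin/non spin pairing through the quotient by $\Delta Z$; once that is in place the argument is routine.
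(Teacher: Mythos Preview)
Your argument is correct and follows essentially the same route as the paper: both use the central product decomposition of $N_{\tilde{\mathfrak{S}}_n}(\tilde{D})$, identify the characters of $\tilde{b}_S$ with the tensors $\chi_0\otimes\chi_1$ having $\Delta Z$ in their kernel (the paper cites \cite{NT}, theorem~8.6 for this), observe that $\tilde{b}_1$ has exactly two characters, both of height zero, one spin and one non spin, and then invoke Lemma~\ref{particularcase} for the factor $\tilde{b}_0$. Your write-up simply spells out in more detail the height additivity and the spin/non-spin bookkeeping that the paper leaves implicit.
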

\begin{proof}
Keep the notations of the previous lemma.
To get a character of height zero in $\tilde{b}_{S}$ we need to take one of height zero in $N_{\tilde{\mathfrak{S}}_{2w}}(\tilde{D})$ and one of $\tilde{b}_{1}$ and such that the product has $\Delta Z$ in its kernel (see \cite{NT} theorem 8.6). But $\tilde{b}_{1}$ has only two characters both of height zero, one spin and one non spin.

Then the result follows from lemma \ref{particularcase}.
\end{proof}

So in the end we obtain the number of height zero spin characters :
\begin{prop}\label{heightzerospin}
Let $\tilde{B}_{S}$ be a block of weight $w$ of $\tilde{\mathfrak{S}}_{n}$, with defect group $\tilde{D}$. Let $\tilde{b}_{S}$ be the Brauer correspondent of $\tilde{B}_{S}$ in $N_{\tilde{\mathfrak{S}}_n}(\tilde{D})$. Then the number of height zero spin characters in $\tilde{b}_{S}$ is :
\begin{itemize}
\item $1$ if $\tilde{B}_{S}$ is of weight $0$ 
\item $2$ if $\tilde{B}_{S}$ is of weight $1$
\item $0$ otherwise
\end{itemize}
\end{prop}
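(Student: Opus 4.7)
The plan is to assemble Proposition \ref{heightzerospin} directly from the preceding lemmas by splitting on the value of the weight $w$ and, within the case $w\geq 1$, on whether $n=2w$ or $n>2w$. Since all three sub-cases have been prepared in the text, the only work is to account for the case $w=0$ (not yet treated in this subsection) and then to collate Lemma \ref{particularcase} with Lemma \ref{caseofc}.

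First I would handle the weight-zero case. If $\tilde{B}_S$ has weight $0$ then a defect group is $Z$ itself, which is central in $\tilde{\mathfrak{S}}_n$, so $N_{\tilde{\mathfrak{S}}_n}(\tilde{D})=\tilde{\mathfrak{S}}_n$ and the Brauer correspondent $\tilde{b}_S$ is just $\tilde{B}_S$. The count of height zero spin characters in $\tilde{b}_S$ is therefore the one given by Proposition \ref{heightzerobis}, namely $1$. (The earlier remark on central defect groups, citing theorem 8.14 of \cite{NT}, of course gives the same answer.)

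Next, for $w\geq 1$ I would split on $n$. If $n=2w$ then $\tilde{B}_S$ is the principal block of $\tilde{\mathfrak{S}}_{2w}$, and $\tilde{b}_S$ is the unique block of $N_{\tilde{\mathfrak{S}}_{2w}}(\tilde{D})$ described by Lemma \ref{particularcase}. That lemma immediately yields $0$ height zero spin characters when $w\geq 2$ and $2$ when $w=1$. If instead $n>2w$ then Lemma \ref{caseofc} applies verbatim and gives exactly the same conclusions: $0$ for $w\geq 2$ and $2$ for $w=1$. Collecting the three cases matches the three bullets of the proposition.

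Since every ingredient is in place, there is no real obstacle: the proof is a one-line case distinction. The only point deserving a comment is the uniformity of the answer between $n=2w$ and $n>2w$ for $w\geq 1$; this is not an accident but the content of Lemma \ref{caseofc}, whose proof used the fact that the block $\tilde{b}_1$ of $\tilde{\mathfrak{S}}_{n-2w}$ contributes exactly one spin and one non-spin character of height zero, so the spin/non-spin count in the central product $N_{\tilde{\mathfrak{S}}_{2w}}(\tilde{D})\stackrel{\sim}{\times}\tilde{\mathfrak{S}}_{n-2w}$ is inherited from $N_{\tilde{\mathfrak{S}}_{2w}}(\tilde{D})$ alone. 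This is precisely what reduces the general case to Lemma \ref{particularcase}, closing the proof.
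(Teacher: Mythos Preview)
Your proof is correct and follows essentially the same route as the paper's own argument: the weight-$0$ case is handled via the central-defect observation (the paper cites Theorem~\ref{heightpreserving}, you equivalently cite Proposition~\ref{heightzerobis}), and the cases $w\geq 1$ are split into $n=2w$ and $n>2w$ and handled by Lemma~\ref{particularcase} and Lemma~\ref{caseofc} respectively. Your write-up is slightly more explicit about the case split and the role of the central product, but there is no substantive difference.
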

\begin{proof}
  The first case is dealt with in theorem \ref{heightpreserving}. The other cases are dealt with in the previous lemma and lemma \ref{particularcase}.
\end{proof}

\subsection{Calculation for normalizers in $\tilde{\mathfrak{A}}_n$}

The previous arguments are no longer valid because if $D$ is a defect group of a block of weight $w\geq 2$ of $\mathfrak{A}_n$, which is a Sylow subgroup of $\mathfrak{A}_{2w}$, it does not necessarily hold that $N_{\mathfrak{A}_n}(D)=N_{\mathfrak{A}_{2w}}(D)\times \mathfrak{A}_{n-2w}$. But we will adapt the proof using the groups $N_{\mathfrak{A}_{2w}}(D)\times \mathfrak{A}_{n-2w}$ and $N_{\mathfrak{S}_{2w}}(D)\times \mathfrak{S}_{n-2w}$. The strategy is still to reduce the problem to a principal block. Technically the proof will go as follows : treat the easy cases of weight $0$ and $1$, then deal with the particular cases for which $w\geq 2, n=2w$ and  $w\geq 2, n=2w+1$ (those are the cases of principal block) and then deal with the remaining cases $w\geq 2, n-2w>1$ using what has been done before.

From now if $G_{1}$ and $G_{2}$ are two groups having $Z$ as their center, and $b_{1}, b_{2}$ are blocks of $G_{1}$ and $G_{2}$ respectively then we will denote by $b_{1}.b_{2}$ the block of the central product of $G_{1}$ and $G_{2}$ which is dominated by the product of the blocks $b_{1}\times b_{2}$ in the product $G_{1}\times G_{2}$ according to lemma \ref{brauerquotient}.

Note that a defect group of a block of weight $0$ or $1$ is central, so that these cases are easily dealt with.

Now let $\tilde{B}_{A}$ be a block of weight $w\geq 2$ of $\tilde{\mathfrak{A}}_{2w}$. Such a block is the principal block. Let $\tilde{D}$ be a defect group of $\tilde{B}_{A}$, which is a Sylow subgroup of $\tilde{\mathfrak{A}}_{2w}$. 

\begin{lem}\label{principalblockcase}
The group $N_{\tilde{\mathfrak{A}}_{2w}}(\tilde{D})$ has only one block, and it is covered by the unique block of $N_{\tilde{\mathfrak{S}}_{2w}}(\tilde{D})$. In both those blocks, there are no spin characters of height zero. Moreover $N_{\tilde{\mathfrak{S}}_{2w}}(\tilde{D})$ is the normalizer in $\tilde{\mathfrak{S}}_{2w}$ of one of the Sylow subgroups of $\tilde{\mathfrak{S}}_{2w}$ containing $\tilde{D}$.
\end{lem}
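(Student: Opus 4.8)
The plan is to push everything through the projection $\pi\colon\tilde{\mathfrak{S}}_{2w}\to\mathfrak{S}_{2w}$ and reduce to the self-normalizing behaviour of Sylow $2$-subgroups. Write $D=\pi(\tilde D)$; since $Z=\Ker\pi\leq\tilde D$ we have $\tilde D=\pi^{-1}(D)$, a Sylow $2$-subgroup of $\mathfrak{A}_{2w}$. Choose a Sylow $2$-subgroup $S$ of $\mathfrak{S}_{2w}$ with $D\trianglelefteq S$ (possible since $[\mathfrak{S}_{2w}:\mathfrak{A}_{2w}]=2$, and then $[S:D]=2$) and set $\tilde S=\pi^{-1}(S)$; this is a Sylow $2$-subgroup of $\tilde{\mathfrak{S}}_{2w}$ containing $\tilde D$ as a normal subgroup of index $2$, and $\tilde S\cap\tilde{\mathfrak{A}}_{2w}=\pi^{-1}(S\cap\mathfrak{A}_{2w})=\tilde D$. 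Because $\pi^{-1}$ is injective on subgroups and $\tilde g\,\pi^{-1}(H)\,\tilde g^{-1}=\pi^{-1}(\pi(\tilde g)H\pi(\tilde g)^{-1})$ for any $H\leq\mathfrak{S}_{2w}$ (as in the proof of lemma \ref{brauerquotient}), one gets $N_{\tilde{\mathfrak{S}}_{2w}}(\pi^{-1}(H))=\pi^{-1}(N_{\mathfrak{S}_{2w}}(H))$; applied to $D$ and $S$ this reduces the last assertion of the lemma to the equality $N_{\mathfrak{S}_{2w}}(D)=N_{\mathfrak{S}_{2w}}(S)$.

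For $w\geq 3$ (so $2w\geq 6$) I would argue as follows. Sylow $2$-subgroups of symmetric groups are self-normalizing, so $N_{\mathfrak{S}_{2w}}(S)=S$; and for $2w\geq 6$ the same holds in the alternating group, $N_{\mathfrak{A}_{2w}}(D)=D$ (from the structure of the Sylow subgroups, see \cite{MichOl} \S 3). As $S\leq N_{\mathfrak{S}_{2w}}(D)$ and $S\not\subseteq\mathfrak{A}_{2w}$, this gives $[N_{\mathfrak{S}_{2w}}(D):N_{\mathfrak{A}_{2w}}(D)]=2$ and hence $N_{\mathfrak{S}_{2w}}(D)=S=N_{\mathfrak{S}_{2w}}(S)$, so after lifting $N_{\tilde{\mathfrak{S}}_{2w}}(\tilde D)=\pi^{-1}(S)=\tilde S=N_{\tilde{\mathfrak{S}}_{2w}}(\tilde S)$, which is the last assertion, and then $N_{\tilde{\mathfrak{A}}_{2w}}(\tilde D)=N_{\tilde{\mathfrak{S}}_{2w}}(\tilde D)\cap\tilde{\mathfrak{A}}_{2w}=\tilde S\cap\tilde{\mathfrak{A}}_{2w}=\tilde D$. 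So $N_{\tilde{\mathfrak{A}}_{2w}}(\tilde D)=\tilde D$ and $N_{\tilde{\mathfrak{S}}_{2w}}(\tilde D)=\tilde S$ are $2$-groups, each having a unique (principal) block, and by lemma \ref{covering} (with $\tilde D\trianglelefteq\tilde S$, $m=1$) the block of $\tilde S$ covers the block of $\tilde D$. For the absence of height zero spin characters, note that the height zero characters of a $2$-group are precisely its linear characters, so it suffices to check $z\in\tilde D'$ (then $z\in\tilde S'$ too, and no linear character of $\tilde D$ or $\tilde S$ is a spin character). Since all defect groups are conjugate and this count is conjugation invariant, I may assume $\tilde D$ contains $Q=\pi^{-1}(V)$ for $V$ a Klein four-subgroup of a subgroup $\mathfrak{A}_4$ of $\mathfrak{A}_{2w}$; the spin cover restricts to $\mathfrak{A}_4$ as the non-split extension ($\pi^{-1}(\mathfrak{A}_4)\cong\SL_2(3)$, via $\SL_2(3)=2.\mathfrak{A}_4\hookrightarrow 2.\mathfrak{A}_5\hookrightarrow\tilde{\mathfrak{A}}_{2w}$), so $Q$ is quaternion of order $8$ with $Q'=Z$, whence $z\in Q'\subseteq\tilde D'\subseteq\tilde S'$. (Alternatively, once $z\in\tilde D'$ is known this is lemma \ref{simplelem} for $\tilde D$, and the $\tilde S$-part is also contained in lemma \ref{particularcase}.)

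It remains to treat $w=2$ separately and to identify the main obstacle. For $w=2$ one has $\tilde{\mathfrak{A}}_4=\tilde{\mathfrak{S}}_4'\cong\SL_2(3)$ with $\tilde D$ its normal Sylow $2$-subgroup (a quaternion group of order $8$), so $N_{\tilde{\mathfrak{A}}_4}(\tilde D)=\tilde{\mathfrak{A}}_4\cong\SL_2(3)$ and $N_{\tilde{\mathfrak{S}}_4}(\tilde D)=\tilde{\mathfrak{S}}_4\cong\GL_2(3)$; each has a single $2$-block (no non-trivial normal subgroup of odd order), the latter covering the former, and the spin characters of $\SL_2(3)$ are its three faithful degree-$2$ characters, which are of positive height, so no height zero spin character occurs. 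The delicate point of the whole argument, and the reason $w=2$ must be isolated, is exactly the input $N_{\mathfrak{A}_{2w}}(D)=D$: it holds only for $2w\geq 6$, since for $2w=4$ one has $D=V_4\trianglelefteq\mathfrak{A}_4$. The remaining ingredient, $z\in\tilde D'$, is elementary once one spots the quaternion subgroup of order $8$ inside $\tilde D$; everything else is bookkeeping with lemmas \ref{simplelem}, \ref{covering}, \ref{brauerquotient} and \ref{particularcase}.
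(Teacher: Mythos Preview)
Your proof is correct and mirrors the paper's approach: both split off $w=2$ for a direct inspection of $\SL_2(3)$ and $\GL_2(3)$, and for $w\geq 3$ invoke \cite{MichOl} \S 3 for the self-normalizing property, then finish via lemmas~\ref{simplelem}, \ref{covering} and \ref{particularcase}. Your explicit quaternion argument for $z\in\tilde D'$ is a useful sharpening of the paper's bare appeal to lemma~\ref{simplelem}.

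Two small points are left implicit in your $w=2$ paragraph. First, the lemma asserts the absence of height-zero spin characters in \emph{both} blocks, but you only check $\SL_2(3)$; the $\GL_2(3)$ case is equally immediate from lemma~\ref{simplelem}, since $z\in\GL_2(3)'=\SL_2(3)$. Second, you do not address the ``Moreover'' clause for $w=2$; in the paper this is obtained uniformly by combining $[N_{\tilde{\mathfrak S}_{2w}}(\tilde D):N_{\tilde{\mathfrak A}_{2w}}(\tilde D)]=2$ (lemma~\ref{covering}) with the description of $N_{\tilde{\mathfrak S}_{2w}}$ of a Sylow given in lemma~\ref{particularcase}, rather than via your reduction to $N_{\mathfrak S_{2w}}(D)=N_{\mathfrak S_{2w}}(S)$, which indeed breaks down at $w=2$.
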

\begin{proof}
If $w\geq 3$ then $N_{\tilde{\mathfrak{A}}_{2w}}(\tilde{D})=\tilde{D}$ (see \cite{MichOl} \S 3). It has no height zero spin characters by lemma \ref{simplelem}.

If $w=2$ then $N_{\tilde{\mathfrak{A}}_{2w}}(\tilde{D})=\tilde{\mathfrak{A}}_4=\SL_{2}(3)$ which has only one block and no spin characters of height zero by lemma \ref{simplelem} (or by looking at the character table).

We have by lemma \ref{covering} that $[N_{\tilde{\mathfrak{S}}_{2w}}(\tilde{D}):N_{\tilde{\mathfrak{A}}_{2w}}(\tilde{D})]=2$ and because of the previous description of those normalizers and of lemma \ref{particularcase} we obtain that in fact $N_{\tilde{\mathfrak{S}}_{2w}}(\tilde{D})$ is the normalizer in $\tilde{\mathfrak{S}}_{2w}$ of one of the Sylow subgroups of $\tilde{\mathfrak{S}}_{2w}$ containing $\tilde{D}$.

The fact about the covering of block is then obvious and we know that there are no spin characters of height zero in $N_{\tilde{\mathfrak{S}}_{2w}}(\tilde{D})$ by lemma \ref{particularcase}.
\end{proof}

The block of $N_{\tilde{\mathfrak{A}}_{2w}}(\tilde{D})$ will be denoted by $\tilde{c}_{0}$ and that of $N_{\tilde{\mathfrak{S}}_{2w}}(\tilde{D})$ will be denoted by $\tilde{b}_{0}$. Note that the notation agree with that of the previous section and that those blocks corresponds to equal block idempotents (the trivial one).

Let us treat a similar case.  Let $\tilde{B}$ be a block of weight $w\geq 2$ of $\tilde{\mathfrak{A}}_{2w+1}$. Such a block is again the principal block and a defect group $\tilde{D}$ is a Sylow subgroup of $\tilde{\mathfrak{A}}_{2w}$ which is also a Sylow subgroup of $\tilde{\mathfrak{A}}_{2w+1}$.

\begin{lem}\label{principalblock2w1}
  The group $N_{\tilde{\mathfrak{A}}_{2w+1}}(\tilde{D})$  has only one block, and no spin characters of height zero.
\end{lem}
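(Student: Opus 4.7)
\emph{Plan.} The idea is to reduce directly to Lemma \ref{principalblockcase} by showing that the normalizer of $\tilde{D}$ in $\tilde{\mathfrak{A}}_{2w+1}$ coincides with its normalizer in $\tilde{\mathfrak{A}}_{2w}$, so that the statement becomes a word-for-word consequence of the lemma already proved for the case $n=2w$.

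First, since $[\tilde{\mathfrak{A}}_{2w+1}:\tilde{\mathfrak{A}}_{2w}]=2w+1$ is odd, $\tilde{D}$ is simultaneously a Sylow $2$-subgroup of $\tilde{\mathfrak{A}}_{2w}$ and of $\tilde{\mathfrak{A}}_{2w+1}$, and passing through $\pi$, the group $D=\pi(\tilde{D})$ is simultaneously a Sylow $2$-subgroup of $\mathfrak{A}_{2w}$ and of $\mathfrak{A}_{2w+1}$. Next I would establish that $N_{\mathfrak{A}_{2w+1}}(D)=N_{\mathfrak{A}_{2w}}(D)$. The key observation is that for $w\geq 2$ the group $D$ has no fixed point on $\{1,\ldots,2w\}$; indeed a fixed point would force $D$ into the point stabilizer $\mathfrak{A}_{2w-1}$, while a $2$-adic valuation count gives $v_{2}(|\mathfrak{A}_{2w}|)-v_{2}(|\mathfrak{A}_{2w-1}|)=v_{2}(2w)\geq 1$, a contradiction. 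Consequently $2w+1$ is the unique $D$-fixed point in $\{1,\ldots,2w+1\}$, so any element of $N_{\mathfrak{S}_{2w+1}}(D)$ must fix it, forcing $N_{\mathfrak{S}_{2w+1}}(D)\subseteq \mathfrak{S}_{2w}$. Intersecting with $\mathfrak{A}_{2w+1}$, and using that an element of $\mathfrak{S}_{2w}$ has the same sign viewed in $\mathfrak{S}_{2w}$ or in $\mathfrak{S}_{2w+1}$, gives the claimed equality.

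The lift to double covers is automatic: because $Z\subseteq\tilde{D}$ we have $\pi^{-1}(D)=\tilde{D}$, so $\tilde{g}\in\tilde{\mathfrak{A}}_{2w+1}$ normalizes $\tilde{D}$ exactly when $\pi(\tilde{g})$ normalizes $D$; combined with the equality of normalizers in the alternating groups, this yields
\[
N_{\tilde{\mathfrak{A}}_{2w+1}}(\tilde{D})=\pi^{-1}\bigl(N_{\mathfrak{A}_{2w+1}}(D)\bigr)=\pi^{-1}\bigl(N_{\mathfrak{A}_{2w}}(D)\bigr)=N_{\tilde{\mathfrak{A}}_{2w}}(\tilde{D}).
\]
Lemma \ref{principalblockcase} then delivers both assertions simultaneously: a single block and no height zero spin characters.

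The main obstacle is the combinatorial input of the second step, namely checking that a Sylow $2$-subgroup of $\mathfrak{A}_{2w}$ acts without fixed points on $\{1,\ldots,2w\}$ when $w\geq 2$; but as indicated this is settled cleanly by the $2$-adic valuation comparison. Everything else is formal manipulation with $\pi$ and invocation of Lemma \ref{principalblockcase}.
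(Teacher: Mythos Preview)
Your argument is correct. The fixed-point/valuation step is sound: a Sylow $2$-subgroup of $\mathfrak{A}_{2w}$ cannot sit inside a point stabilizer $\mathfrak{A}_{2w-1}$ since $v_2(2w)\geq 1$, so $2w+1$ is the unique $D$-fixed point and every element of $N_{\mathfrak{S}_{2w+1}}(D)$ must fix it. The identification $\mathfrak{S}_{2w}\cap\mathfrak{A}_{2w+1}=\mathfrak{A}_{2w}$ and the lift through $\pi$ are routine, giving $N_{\tilde{\mathfrak{A}}_{2w+1}}(\tilde{D})=N_{\tilde{\mathfrak{A}}_{2w}}(\tilde{D})$ and hence the conclusion from Lemma~\ref{principalblockcase}.

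The paper proceeds slightly differently: rather than proving the two normalizers coincide as abstract groups, it simply recomputes $N_{\tilde{\mathfrak{A}}_{2w+1}}(\tilde{D})$ case by case (citing \cite{MichOl} \S3 for $w\geq 3$ to get $N_{\tilde{\mathfrak{A}}_{2w+1}}(\tilde{D})=\tilde{D}$, and observing $N_{\tilde{\mathfrak{A}}_5}(\tilde{D})=\tilde{\mathfrak{A}}_4=\SL_2(3)$ for $w=2$) and then reapplies Lemma~\ref{simplelem}. The two routes arrive at literally the same groups, but your version is more self-contained: it avoids a second appeal to \cite{MichOl} and makes transparent \emph{why} the $2w$ and $2w{+}1$ cases behave identically, at the cost of the short combinatorial fixed-point argument. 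The paper's version is shorter on the page but leans on the external reference twice.
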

\begin{proof}
It is the same proof as in the previous proposition.  

If $w\geq 3$ then $N_{\tilde{\mathfrak{A}}_{2w+1}}(\tilde{D})=\tilde{D}$ (see \cite{MichOl} \S 3) has only one block and no spin characters of height zero by lemma \ref{simplelem}.

If $w=2$ then as in the previous proposition we have $N_{\tilde{\mathfrak{A}}_{2w+1}}(\tilde{D})=\tilde{\mathfrak{A}}_4=\SL_2(3)$ which has only one block with no spin characters of height zero.
\end{proof}

Now let us deal with the remaining cases.

Let $\tilde{B}_A$ be a block of $\tilde{\mathfrak{A}}_n$, of weight $w\geq 2$ such that $n-2w>1$, and with defect group $\tilde{D}$ a Sylow subgroup of $\tilde{\mathfrak{A}}_{2w}$. Such a block is covered by a block $\tilde{B}_{S}$ of $\tilde{\mathfrak{S}}_{n}$ whose Brauer correspondent is $\tilde{b}_{S}=\tilde{b}_{0}.\tilde{b}_{1}$ with and $\tilde{b}_{1}$ being the block of weight $0$ of $\tilde{\mathfrak{S}}_{n-2w}$ as in the previous section.  Let $\tilde{b}_{A}$ be the Brauer correspondent of $\tilde{B}_A$ in $\tilde{N}_A=N_{\tilde{\mathfrak{A}}_n}(\tilde{D})$. Then $\tilde{b}_{A}$ is covered by $\tilde{b}_{0}.\tilde{b}_{1}$ thanks to \cite{Nav} theorem 9.28. Moreover by lemma \ref{covering} we have that $\tilde{b}_{A}$ and $\tilde{b}_{0}.\tilde{b}_{1}$ correspond to equal block idempotents.

Let $\tilde{H}=N_{\tilde{\mathfrak{A}}_{2w}}(\tilde{D}) \stackrel{\sim}{\times}  \tilde{\mathfrak{A}}_{n-2w}$ and let $\tilde{N}_S=N_{\tilde{\mathfrak{S}}_n}(\tilde{D})=N_{\tilde{\mathfrak{S}}_{2w}}(\tilde{D})\stackrel{\sim}{\times} \tilde{\mathfrak{S}}_{n-2w}$. We have the inclusions : $\tilde{H}\subset \tilde{N}_A\subset \tilde{N}_S$. Observe that $[\tilde{N}_S:\tilde{N}_A]=2$ by lemma \ref{covering}. Then $\tilde{H}$ is a subgroup of $\tilde{N}_A$ of index $2$ because it is a subgroup of index $4$ in $\tilde{N}_S$ (recall the descriptions of the normalizers in $\tilde{\mathfrak{A}}_{2w}$ and $\tilde{\mathfrak{S}}_{2w}$ and that $n-2w>1$). We have $\tilde{H}\triangleleft \tilde{N}_S$, and $\tilde{H}\triangleleft \tilde{N}_A \triangleleft \tilde{N}_{S}$.

Recall that $\tilde{b}_{1}$ covers two blocks of weight $0$ of $\tilde{\mathfrak{A}}_{n-2w}$, let us denote them by $\tilde{c}_{1}, \tilde{c}'_{1}$. Then $\tilde{b}_{0}.\tilde{b}_{1}$ covers exactly two blocks of $\tilde{H}$ which are $\tilde{c}=\tilde{c}_{0}.\tilde{c}_{1}$ and $\tilde{c}'=\tilde{c}_{0}.\tilde{c}'_{1}$. So that $\tilde{b}_{A}$ covers exactly two blocks of $\tilde{H}$ which are $\tilde{c}$ and $\tilde{c}'$.

\begin{lem}\label{specialcase}
 The blocks $\tilde{c}$ and $\tilde{c}'$ have no spin characters of height zero.
\end{lem}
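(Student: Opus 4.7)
The plan is to reduce characters of $\tilde{c}$ (and $\tilde{c}'$) to pairs of characters of $\tilde{c}_{0}$ and $\tilde{c}_{1}$ (resp.\ $\tilde{c}'_{1}$) via the central product structure, and then to invoke Lemma~\ref{principalblockcase}, which already tells us that $\tilde{c}_{0}$ has no spin character of height zero.

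First I would use the definition of central product: characters of $\tilde{H}=N_{\tilde{\mathfrak{A}}_{2w}}(\tilde{D})\stackrel{\sim}{\times}\tilde{\mathfrak{A}}_{n-2w}$ correspond bijectively to characters $\chi_{0}\otimes\chi_{1}$ of $N_{\tilde{\mathfrak{A}}_{2w}}(\tilde{D})\times\tilde{\mathfrak{A}}_{n-2w}$ with $\Delta Z$ in their kernel. By Theorem~8.6 of \cite{NT}, the characters of $\tilde{c}=\tilde{c}_{0}.\tilde{c}_{1}$ are exactly the pairs $(\chi_{0},\chi_{1})\in\Irr(\tilde{c}_{0})\times\Irr(\tilde{c}_{1})$ which agree on $Z$. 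In particular the character of $\tilde{H}$ is spin if and only if $\chi_{0}$ and $\chi_{1}$ are both spin (and similarly for the non-spin case). The same analysis applies verbatim to $\tilde{c}'=\tilde{c}_{0}.\tilde{c}'_{1}$.

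Next I would verify that heights add. A defect group of $\tilde{c}_{0}$ is $\tilde{D}$, a defect group of $\tilde{c}_{1}$ (or $\tilde{c}'_{1}$) is $Z$, and so a defect group of $\tilde{c}$ in $\tilde{H}$ is $\tilde{D}\stackrel{\sim}{\times}Z=\tilde{D}$. Using $|\tilde{H}|=|N_{\tilde{\mathfrak{A}}_{2w}}(\tilde{D})|\cdot|\tilde{\mathfrak{A}}_{n-2w}|/2$, a direct $2$-adic valuation computation on $(\chi_{0}\otimes\chi_{1})(1)=\chi_{0}(1)\chi_{1}(1)$ shows that the height of $\chi_{0}\otimes\chi_{1}$ in $\tilde{c}$ equals the sum of the heights of $\chi_{0}$ in $\tilde{c}_{0}$ and $\chi_{1}$ in $\tilde{c}_{1}$.

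Combining these two points, a spin character of height zero in $\tilde{c}$ would yield a pair $(\chi_{0},\chi_{1})$ with both components spin and both of height zero. In particular $\chi_{0}$ would be a spin character of height zero in $\tilde{c}_{0}$, which contradicts Lemma~\ref{principalblockcase}. The same argument, with $\tilde{c}_{1}$ replaced by $\tilde{c}'_{1}$, rules out spin characters of height zero in $\tilde{c}'$. The only step that requires any care is the height-additivity computation, which is routine once the defect groups are identified; otherwise, the statement follows directly from Lemma~\ref{principalblockcase}.
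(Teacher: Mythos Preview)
Your proposal is correct and follows essentially the same approach as the paper: the paper simply says ``This is the same proof as in lemma~\ref{caseofc}, using lemma~\ref{principalblockcase}'', and the proof of Lemma~\ref{caseofc} is precisely the central-product decomposition via \cite{NT} Theorem~8.6 that you spell out, reducing to the absence of height zero spin characters in $\tilde{c}_{0}$. Your version is just more explicit about the height-additivity step, which the paper leaves implicit.
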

\begin{proof}
This is the same proof as in lemma \ref{caseofc}, using lemma \ref{principalblockcase}.
\end{proof}

This discussion leads to the following lemma :

\begin{lem}
  Let $w,n$ be integers such that $w\geq 2$ and $n-2w>1$. Let $\tilde{B}_{A}$ be a block of weight $w$ of  $\tilde{\mathfrak{A}}_n$ and let $\tilde{D}$ be a defect group. Let $\tilde{b}_{A}$ be the Brauer correspondent of $\tilde{B}_{A}$ in $N_{\tilde{\mathfrak{A}}_n}(\tilde{D})$.
 Then  all height zero characters of $\tilde{b}_{A}$ are non spin.
\end{lem}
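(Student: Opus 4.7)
The plan is to reduce the claim to lemma \ref{specialcase} via Clifford theory applied to the normal subgroup $\tilde{H}$ of index $2$ in $\tilde{N}_A$. Since $\tilde{b}_A$ covers exactly the two distinct blocks $\tilde{c}$ and $\tilde{c}'$ of $\tilde{H}$, any element of $\tilde{N}_A\setminus\tilde{H}$ exchanges them, so these two blocks form a single $\tilde{N}_A$-orbit of size two.

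First I would show that every $\chi\in\Irr(\tilde{b}_A)$ is induced from $\tilde{H}$. Picking $\theta\in\Irr(\tilde{c})$ and $t\in\tilde{N}_A\setminus\tilde{H}$, the conjugate $\theta^t$ lies in the distinct block $\tilde{c}'$, so $\theta^t\neq\theta$; the stabilizer of $\theta$ in $\tilde{N}_A$ is therefore $\tilde{H}$, and Clifford theory yields that $\theta^{\tilde{N}_A}$ is irreducible in $\tilde{b}_A$ with $\theta^{\tilde{N}_A}(1)=2\theta(1)$, and every character of $\tilde{b}_A$ arises this way.

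Next I would transport the height and spin conditions between $\chi$ and $\theta$. The block $\tilde{c}=\tilde{c}_0.\tilde{c}_1$ has defect group $\tilde{D}$: indeed $\tilde{c}_0$ has defect group $\tilde{D}$ by the description of $N_{\tilde{\mathfrak{A}}_{2w}}(\tilde{D})$ given in lemma \ref{principalblockcase}, while $\tilde{c}_1$ has defect group $Z\subset\tilde{D}$, whence by lemma \ref{brauerquotient} the defect group of $\tilde{c}$ is the image $\tilde{D}\stackrel{\sim}{\times}Z=\tilde{D}$ in the central product. Combined with $|\tilde{N}_A|_2=2|\tilde{H}|_2$ (a consequence of $[\tilde{N}_A:\tilde{H}]=2$) and the relation $\chi(1)=2\theta(1)$, this shows that $\chi$ has height zero in $\tilde{b}_A$ if and only if $\theta$ has height zero in $\tilde{c}$. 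For the spin property, since $Z\subset\tilde{H}$, the condition $Z\subset\Ker(\chi)$ is equivalent to $Z\subset\Ker(\theta)$, so $\chi$ is spin if and only if $\theta$ is.

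Assembling these equivalences, a height zero spin character of $\tilde{b}_A$ would be induced from a height zero spin character of $\tilde{c}$, contradicting lemma \ref{specialcase}. The only point requiring some care is the verification that $\tilde{b}_A$ and $\tilde{c}$ share the same defect group $\tilde{D}$, since it is this coincidence that makes the numerical relation $\chi(1)_2=2\theta(1)_2$ translate cleanly into the equivalence of the two height zero conditions.
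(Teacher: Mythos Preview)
Your proposal is correct and uses the same key ingredients as the paper: the index-$2$ normal subgroup $\tilde{H}\triangleleft\tilde{N}_A$, the fact that $\tilde{b}_A$ covers exactly $\tilde{c}$ and $\tilde{c}'$, and lemma \ref{specialcase}. The execution differs only in direction: the paper restricts a height zero $\chi\in\Irr(\tilde{b}_A)$ to $\tilde{H}$ and invokes \cite{NT}, theorem 5.17, to conclude that the constituents are height zero in $\tilde{c}$ or $\tilde{c}'$ (hence non spin, forcing $\chi$ non spin); you instead argue via induction, first observing that $\tilde{c}$ and $\tilde{c}'$ are swapped by $\tilde{N}_A$, so every $\chi$ is induced from some $\theta\in\Irr(\tilde{c})$, and then matching heights through an explicit defect-group computation. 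Your route is a little longer but more self-contained, as it avoids the black-box citation of \cite{NT}, theorem 5.17; the paper's route is quicker once that result is granted.
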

\begin{proof}
 We keep the previous notations. 

 We know that there are no spin characters of height zero in $\tilde{c}$ by lemma \ref{specialcase}, or in $\tilde{c}'$. Let $\chi$ be a height zero character in $\tilde{b}_{A}$. Then its restriction to $\tilde{H}$ is a sum of height zero characters lying in $\tilde{c}$ or $\tilde{c}'$ (by theorem 5.17 in \cite{NT}). Those are all non spin by lemma \ref{specialcase}. Hence $\chi$ cannot be a spin character.
\end{proof}

The next proposition concludes the study of this section :

\begin{prop}\label{heightzero}
  Let $\tilde{B}_{A}$ be a block of $\tilde{\mathfrak{A}}_n$ with defect $\tilde{D}$ and weight $w$, and let $\tilde{b}_{A}$ be its Brauer correspondent in $N_{\tilde{\mathfrak{A}}_n}(\tilde{D})$. Then the number of spin characters of height zero in $\tilde{b}_{A}$ is :
  \begin{itemize} 
   \item $1$ if $\tilde{B}_{A}$ is of weight $0$ or $1$  
   \item $0$ otherwise
  \end{itemize}
\end{prop}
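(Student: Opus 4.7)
The plan is to finish this proposition as a direct bookkeeping combining the lemmas of the subsection, with a case split on the weight $w$ and, when $w \geq 2$, on the value of $n-2w$.

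First I would dispatch the two cases $w \in \{0,1\}$. In both situations $\mathfrak{A}_{2w}$ is trivial, so a defect group of $\tilde{B}_A$ in $\tilde{\mathfrak{A}}_n$ is forced by the $\tilde{\mathfrak{A}}_n$-analogue of Lemma \ref{lemdom} to satisfy $\tilde{D}=Z$; in particular $\tilde{D}$ is central, so $N_{\tilde{\mathfrak{A}}_n}(\tilde{D})=\tilde{\mathfrak{A}}_n$ and the Brauer correspondent $\tilde{b}_A$ coincides with $\tilde{B}_A$ itself. Proposition \ref{heightzerobis} then delivers exactly one height zero spin character in each of these cases.

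Next, for $w \geq 2$, I would split according to $n-2w$. If $n=2w$ or $n=2w+1$, then $\tilde{B}_A$ is the principal block of $\tilde{\mathfrak{A}}_n$ and $\tilde{b}_A$ is the unique block of $N_{\tilde{\mathfrak{A}}_n}(\tilde{D})$ produced respectively by Lemma \ref{principalblockcase} and Lemma \ref{principalblock2w1}; in both statements this block is shown to contain no height zero spin character. If $n-2w>1$, the immediately preceding lemma says that every height zero character of $\tilde{b}_A$ is non spin. Taken together, the three sub-cases give the value $0$ whenever $w\geq 2$, matching the statement.

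The genuinely nontrivial step is the case $n-2w>1$, and this has already been carried out above: since $N_{\tilde{\mathfrak{A}}_n}(\tilde{D})$ does not split as a direct product $N_{\tilde{\mathfrak{A}}_{2w}}(\tilde{D}) \times \tilde{\mathfrak{A}}_{n-2w}$ one has to sandwich it between the central product $\tilde{H}=N_{\tilde{\mathfrak{A}}_{2w}}(\tilde{D}) \stackrel{\sim}{\times} \tilde{\mathfrak{A}}_{n-2w}$ and $\tilde{N}_S=N_{\tilde{\mathfrak{S}}_n}(\tilde{D})$, use Lemma \ref{covering} to identify block idempotents, and then restrict height zero characters from $\tilde{b}_A$ down to the two blocks $\tilde{c},\tilde{c}'$ of $\tilde{H}$ via Theorem 5.17 of \cite{NT} together with Lemma \ref{specialcase}. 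Once this ingredient is in place, the present proposition itself is just a tabulation of the preceding lemmas according to $w$ and $n-2w$.
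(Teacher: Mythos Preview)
Your proposal is correct and follows essentially the same case split as the paper: $w\in\{0,1\}$ handled via the central defect group observation (the paper cites Theorem \ref{heightpreserving} here, whereas you more precisely cite Proposition \ref{heightzerobis}, but the content is identical), and for $w\geq 2$ the three sub-cases $n=2w$, $n=2w+1$, $n-2w>1$ handled respectively by Lemmas \ref{principalblockcase}, \ref{principalblock2w1}, and the immediately preceding lemma. Your third paragraph is a faithful recap of the argument already carried out in the subsection and adds nothing new, but nothing incorrect either.
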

\begin{proof}
  The cases of central defect groups corresponding to $w=0$ or $w=1$ are dealt with using theorem \ref{heightpreserving}. Now if $w\geq 2$ the cases $n=2w$ and $2w=2w+1$ were treated separately in this section in lemmas \ref{principalblockcase} and \ref{principalblock2w1}. The remaining cases are dealt with in the previous lemma.
\end{proof}

\section{Inductive AM condition for $\mathfrak{A}_n$, $n \neq 6,7$}

Before stating the inductive AM condition let us state for convenience an easily adapted lemma from lemma 2.2 in \cite{Spa} :

\begin{lem}
  Let $X$ be a finite group, $Z$ be a subgroup of $Z(X)$, and set $\bar{X}=X/Z$ and $\pi : X\rightarrow \bar{X}$ the canonical surjection. Let $rep : \bar{X} \rightarrow X$ be a $Z$ section, \textsl{i.e} a map such that $rep(1)=1$ and $\pi \circ rep = \ID_{\bar{X}}$. Let $\chi\in\Irr(X)$ and $\rho_{\chi} : X \rightarrow \GL_{\chi(1)}(\mathbb{C})$ be a representation affording $\chi$.

Then the map $\mathscr{P} : \bar{X} \rightarrow GL_{\chi(1)}(\mathbb{C})$ defined by $\mathscr{P}=\rho_{\chi} \circ rep$ is a projective representation (see \cite{Isa}) and its factor set $\alpha$ satisfies :
\begin{displaymath}
  \alpha(x,y)=\mu(rep(x).rep(y).rep(xy)^{-1}) \textrm{ where } \mu\in Irr(Z) \textrm{ and } \chi_{|Z}=\chi(1)\mu
\end{displaymath}

In this case we say that $\mathscr{P}$ is obtained from $\chi$ using $rep$.
\end{lem}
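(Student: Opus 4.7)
The plan is to verify the claim directly from the definitions of a projective representation and its associated factor set. The two ingredients are Schur's lemma (which pins down how $\rho_{\chi}$ acts on the central subgroup $Z$) and a straightforward multiplicativity computation on $rep$.

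First I would observe that since $Z\subseteq Z(X)$ and $\rho_{\chi}$ is irreducible, Schur's lemma provides a linear character $\lambda\in\Irr(Z)$ such that $\rho_{\chi}(z)=\lambda(z)\cdot I_{\chi(1)}$ for every $z\in Z$. Taking traces gives $\chi(z)=\chi(1)\lambda(z)$, so $\chi_{|Z}=\chi(1)\lambda$, which forces $\lambda=\mu$. Next, for each pair $x,y\in\bar{X}$, set
\begin{displaymath}
 z(x,y)=rep(x)\cdot rep(y)\cdot rep(xy)^{-1}.
\end{displaymath}
Applying $\pi$ and using $\pi\circ rep=\ID_{\bar{X}}$ shows $\pi(z(x,y))=1$, so $z(x,y)\in Z$. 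This is the only place where the section property is used.

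Now I would compute $\mathscr{P}(x)\mathscr{P}(y)$ by pushing everything through $\rho_{\chi}$. Since $rep(x)\cdot rep(y)=z(x,y)\cdot rep(xy)$ and $\rho_{\chi}$ is a genuine representation,
\begin{displaymath}
\mathscr{P}(x)\mathscr{P}(y)=\rho_{\chi}(rep(x))\rho_{\chi}(rep(y))=\rho_{\chi}(z(x,y))\rho_{\chi}(rep(xy))=\mu(z(x,y))\,\mathscr{P}(xy),
\end{displaymath}
where the last equality uses the first step. Thus $\mathscr{P}$ satisfies the projective multiplicativity relation with factor set $\alpha(x,y)=\mu(rep(x)\cdot rep(y)\cdot rep(xy)^{-1})$. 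Finally, the normalization $\mathscr{P}(1)=I_{\chi(1)}$ is immediate from $rep(1)=1$.

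There is essentially no main obstacle here beyond keeping track of Schur's lemma; the statement is really a packaging lemma rather than a deep result. The only point that deserves a sentence in the write-up is the invocation of Schur's lemma to move from a central element acting by an arbitrary matrix to acting by the scalar $\mu(z)$, which is what identifies the factor set with the $Z$-cocycle built from $rep$.
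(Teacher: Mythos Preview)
Your argument is correct: Schur's lemma gives $\rho_\chi(z)=\mu(z)I$ on $Z$, the section property forces $rep(x)\,rep(y)\,rep(xy)^{-1}\in Z$, and the factor set drops out immediately. Note that the paper does not actually supply a proof of this lemma; it is stated as an easy adaptation of lemma~2.2 in \cite{Spa}, and your direct verification is exactly the standard one-paragraph check one would expect.
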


Now let us recall a part of the inductive AM condition from \cite{Spa} (definition 7.2) that will be sufficient for our purpose.

\begin{definition}\label{WeakAM}
   Let $p$ be any prime number. Let $S$ be a perfect simple group, $G$ its universal covering group and $Z=Z(G)$. Let $D$ be a non central defect group of a $p$-block of $G$. We say that the weak AM condition holds for $S$ in characteristic $p$ with respect to $D$ if :
  \begin{enumerate}
  \item There exists an $\Aut(G)_D$-equivariant bijection $\Omega_D : \Irr_0(G|D) \rightarrow \Irr_0(N_G(D)|D)$
  \item $\Irr(Z|\chi)=\Irr(Z|\Omega_D(\chi))$ for every $\chi\in \Irr_0(G|D)$
  \item $\Omega_D(\Irr_0(B))=\Irr_0(b)$ whenever $B$ and $b$ are Brauer correspondent.
  \end{enumerate}
\end{definition}

\begin{rmk}
The actual inductive AM condition stated in definition 7.2 in \cite{Spa} involves conditions on projective representations of certain automorphism groups of $S$, and an equality on trace maps (equation 7.4) which are automatically satisfied if for example $\Aut(S)_D$ is a $p$-group. Those conditions are conditions 7.2(iii) in \cite{Spa}.
\end{rmk}

\begin{definition}
  Keep the notations of definition \ref{WeakAM}. We say that the inductive AM condition is true for $S$ in characteristic $p$ if it holds for any $p$-subgroup which is the defect group of some block of $G$.
\end{definition}

The main result of this paper is the following :

\begin{theorem}\label{InductiveAM}
  The inductive AM condition is true for the simple alternating groups in characteristic $2$.
\end{theorem}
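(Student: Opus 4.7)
The plan is to verify, for each non-central defect group $\tilde{D}$ of a block of $G$ (the universal cover of $S = \mathfrak{A}_n$), the three conditions of Definition \ref{WeakAM} together with the additional hypothesis 7.2(iii) of \cite{Spa}. For $n \geq 8$ the cover is $G = \tilde{\mathfrak{A}}_n$ with $Z = Z(G)$ of order $2$; the exceptional cases $n = 6, 7$ use $G = 6.\mathfrak{A}_n$ and are handled separately at the end.

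The first step, for $n \geq 8$, is to show that 7.2(iii) of \cite{Spa} is automatic. By the remark following Definition \ref{WeakAM}, it suffices to prove that $\Aut(\mathfrak{A}_n)_{D}$ is a $2$-group, where $D = \pi(\tilde{D})$. Using $\Aut(\mathfrak{A}_n) = \mathfrak{S}_n$ and the explicit description of normalizers of Sylow $2$-subgroups of $\mathfrak{A}_{2w}$ already invoked in Lemma \ref{particularcase} (from \cite{MichOl}), the stabilizer factors as a $2$-group times $\mathfrak{S}_{n-2w}$, so being a $2$-group forces $n - 2w \leq 1$, and the two residual cases $n = 2w$ and $n = 2w + 1$ are direct.

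Next I would construct the bijection $\Omega_D$. Since $Z$ is characteristic in $G$, the decomposition of $\Irr_0(G | \tilde{D})$ and $\Irr_0(N_G(\tilde{D}) | \tilde{D})$ into spin and non-spin parts is $\Aut(G)_{\tilde{D}}$-stable, so $\Omega_D$ is built piecewise. On the non-spin part, characters descend through $\pi$ to characters of $\mathfrak{A}_n$, and $\Omega_D$ lifts an equivariant Alperin-McKay bijection for $\mathfrak{A}_n$ in characteristic $2$, itself obtained by transferring the Brunat-Gramain perfect isometries \cite{BruGr} from a block of positive weight $w$ to the principal block of $\mathfrak{A}_{2w}$, where the assertion is a small check. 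On the spin part, Propositions \ref{heightzerobis} and \ref{heightzero} give matching counts of $0$ or $1$ per block, so $\Omega_D$ is either empty or block-wise forced, and hence automatically $\Aut(G)_{\tilde{D}}$-equivariant. Condition (2) of Definition \ref{WeakAM} then amounts to preservation of the spin/non-spin type, which holds by construction, and condition (3) is built in block by block.

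The main obstacle lies in the exceptional cases $n \in \{6, 7\}$. Here the universal covers $6.\mathfrak{A}_n$ have center of order $6$, so the spin/non-spin dichotomy must be refined along $\Irr(Z)$, and $\Out(\mathfrak{A}_6) = (\mathbb{Z}/2)^2$ contributes an extra outer automorphism to control. Since the odd part of $Z$ acts trivially on $2$-blocks, one reduces to a verification on $\tilde{\mathfrak{A}}_6$ and $\tilde{\mathfrak{A}}_7$ extended by the full outer automorphism group, which can be carried out by direct inspection of the character tables in \cite{GLS3}.
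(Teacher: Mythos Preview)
Your treatment of condition~7.2(iii) of \cite{Spa} for $n\geq 8$ has a genuine gap. You propose to verify it via the $2$-group criterion in the remark after Definition~\ref{WeakAM}, but your own computation shows that $\Aut(\mathfrak{A}_n)_D = N_{\mathfrak{S}_n}(D) \cong N_{\mathfrak{S}_{2w}}(D)\times\mathfrak{S}_{n-2w}$ is a $2$-group only when $n-2w\leq 1$. For any block of weight $w\geq 2$ with $n-2w\geq 3$ (already the block of weight~$3$ in $\tilde{\mathfrak{A}}_9$, core $\{2,1\}$) the criterion fails outright, and you give no alternative argument for~7.2(iii) in those cases. These are the generic cases once $n$ grows.

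The paper does not use the $2$-group criterion at all for $n\neq 6,7$. Instead it invokes (the proof of) Lemma~8.1 of \cite{Spa}, whose only structural hypothesis is that $\Out(\mathfrak{A}_n)$ be cyclic of prime order; that lemma simultaneously supplies the $\Aut(G)_D$-equivariant bijection and checks~7.2(iii). The one missing hypothesis of that lemma ($p\nmid |Z|$) is used there solely to guarantee that the bijection preserves $\Irr(Z|\cdot)$, and this is exactly what Propositions~\ref{heightzerobis} and~\ref{heightzero} supply, since for non-central defect there are \emph{no} height-zero spin characters on either side. So your detour through the $2$-group criterion and through \cite{BruGr} for equivariance is both unnecessary and, as written, incomplete. (You also omit $n=5$.)

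For $n\in\{6,7\}$ your proposed reduction to $\tilde{\mathfrak{A}}_n$ is not correct: the blocks of $6.\mathfrak{A}_n$ lying over a non-trivial character of the $\mathbb{Z}/3$-part of the center carry ``triple-cover'' characters that are invisible in $\tilde{\mathfrak{A}}_n$, and they must be matched up as well (this is why the paper finds three blocks of maximal defect in $6.\mathfrak{A}_7$, permuted by the outer automorphism). The paper handles $6.\mathfrak{A}_7$ directly with \cite{GAP4} and quotes \cite{Fry} for $\mathfrak{A}_6$; a character-table inspection could work too, but it must be done in $6.\mathfrak{A}_n$, not in $\tilde{\mathfrak{A}}_n$.
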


The proof of the theorem is the combination of proposition \ref{indAMn} below in which we deal with all the cases but those of $\mathfrak{A}_6$ and of $\mathfrak{A}_7$, and of propositions \ref{indAM7} and \ref{indAM6} below in which we deal with the remaining cases as they have an exceptional Schur cover.

\subsection{Alperin-McKay conjecture for $\tilde{\mathfrak{S}}_n$ and $\tilde{\mathfrak{A}}_n$}

Everything shown in the last two sections leads to the following result :

\begin{prop}
Let $\tilde{B}$ be a $2$-block of weight $w$ and defect group $\tilde{D}$ of $G\in \{\tilde{\mathfrak{S}}_{n}, \tilde{\mathfrak{A}}_n\}$, and $\tilde{b}$ its Brauer correspondent in $N_G(\tilde{D})$.

Then $|Irr_0(\tilde{B})|=|Irr_0(\tilde{b})|$, that is the Alperin-McKay conjecture is true for $G$. Moreover there exists a bijection between height zero characters that preserves spin characters.
\end{prop}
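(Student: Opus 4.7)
The plan is to split $\Irr_0(\tilde{B})$ (resp.\ $\Irr_0(\tilde{b})$) into its spin and non-spin parts and treat the two parts separately, combining known bijections on each side.

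First I would deal with the non-spin characters. By lemma \ref{lemdom} (and its analogue for $\tilde{\mathfrak{A}}_n$), the non-spin characters in $\Irr(\tilde{B})$ are exactly the characters of the dominated block $B$ of $\mathfrak{S}_n$ or $\mathfrak{A}_n$ (pulled back through $\pi$). Because the defect of $\tilde{B}$ is one more than that of $B$ and $|\tilde{G}|_2 = 2|G|_2$, domination preserves heights, so the non-spin part of $\Irr_0(\tilde{B})$ is in canonical bijection with $\Irr_0(B)$. The same argument, applied inside $N_G(\tilde{D})$ using the second half of lemma \ref{brauerquotient}, identifies the non-spin part of $\Irr_0(\tilde{b})$ with $\Irr_0(b)$, where $b$ is the Brauer correspondent of $B$ in $N_{\bar{G}}(D)$.

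Next I would invoke the Alperin–McKay conjecture for symmetric and alternating groups in characteristic $2$, which is known (Michler–Olsson, Olsson for $\mathfrak{S}_n$, and the standard Clifford-theoretic descent to $\mathfrak{A}_n$ using that $[\mathfrak{S}_n:\mathfrak{A}_n]=2$ together with the description of defect groups and normalizers recalled in section 2.1). This yields $|\Irr_0(B)|=|\Irr_0(b)|$, hence equality of the non-spin contributions on the two sides, and a bijection between them.

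For the spin part, the counts have already been carried out: propositions \ref{heightzerobis}, \ref{heightzerospin} and \ref{heightzero} show that the number of spin characters of height zero in $\tilde{B}$ coincides with that in $\tilde{b}$, with common value $1$ or $2$ in the small-weight cases and $0$ otherwise (and matching patterns for $\tilde{\mathfrak{S}}_n$ and $\tilde{\mathfrak{A}}_n$). Choosing any bijection between these finite sets of equal cardinality and combining it with the non-spin bijection above produces a bijection $\Irr_0(\tilde{B}) \to \Irr_0(\tilde{b})$ that preserves the property of being a spin character. The only non-routine ingredient in this whole argument is the Alperin–McKay conjecture for $\mathfrak{S}_n$ and $\mathfrak{A}_n$ in characteristic $2$; everything else is bookkeeping built out of lemma \ref{lemdom}, lemma \ref{brauerquotient} and the propositions of sections 3 and 4.
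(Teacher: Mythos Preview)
Your proposal is correct and follows essentially the same approach as the paper: split into non-spin and spin parts, use the known Alperin--McKay conjecture for $\mathfrak{S}_n$ (Olsson) and $\mathfrak{A}_n$ (Michler--Olsson) together with lemmas \ref{lemdom} and \ref{brauerquotient} for the non-spin characters, and then invoke propositions \ref{heightzerobis}, \ref{heightzerospin} and \ref{heightzero} for the spin characters. The paper's proof is simply a more condensed version of your write-up, citing \cite{Ols} and \cite{MichOl} directly rather than sketching a Clifford-theoretic descent for $\mathfrak{A}_n$.
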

\begin{proof}
  The Alperin-McKay conjecture is true for $\mathfrak{S}_n$ (see \cite{Ols}) and $\mathfrak{A}_n$ (see \cite{MichOl}). So that we have a bijection between height zero non spin characters of $\tilde{B}$ and $\tilde{b}$. Then we have to deal with the spin characters of $\tilde{B}$. Thus the result is obtained by combining propositions \ref{heightzerobis}, \ref{heightzerospin} and \ref{heightzero}.
\end{proof}

\subsection{Inductive AM condition for $\mathfrak{A}_n, n\neq 6,7$}

Now we can prove a part of the main result of this paper : 
\begin{prop}\label{indAMn}
  The inductive AM condition is true for $\mathfrak{A}_n$ for $n\neq 6,7$ and $n\geq 5$.
\end{prop}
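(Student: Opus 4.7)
The plan is to reduce the verification of the inductive AM condition for $\tilde{\mathfrak{A}}_n$, the universal cover of $\mathfrak{A}_n$ when $n\geq 5$ and $n\neq 6,7$, to the ordinary Alperin-McKay conjecture for $\mathfrak{A}_n$ established by Michler and Olsson in \cite{MichOl}. Since $Z=Z(\tilde{\mathfrak{A}}_n)$ has order two, Lemma \ref{brauerquotient} implies that a defect group $\tilde{D}$ is non-central exactly when the associated block has weight $w\geq 2$, so only these blocks need be considered.

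The first step is to use the work of the previous sections to eliminate spin characters. Let $\tilde{B}_{A}$ be a block of $\tilde{\mathfrak{A}}_n$ of weight $w\geq 2$ with defect $\tilde{D}$, dominating a block $B_A$ of $\mathfrak{A}_n$ with defect $D=\pi(\tilde{D})$, and let $\tilde{b}_A$ and $b_A$ be the respective Brauer correspondents. By Propositions \ref{heightzerobis} and \ref{heightzero} neither $\tilde{B}_A$ nor $\tilde{b}_A$ contains any height zero spin character; combined with Lemma \ref{lemdom}, inflation along $\pi$ then yields natural identifications $\Irr_0(\tilde{B}_A)=\Irr_0(B_A)$ and $\Irr_0(\tilde{b}_A)=\Irr_0(b_A)$, all characters involved being trivial on $Z$.

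Starting from a Michler-Olsson bijection $\Omega_D^0:\Irr_0(B_A)\to\Irr_0(b_A)$, I would inflate to obtain $\Omega_D:\Irr_0(\tilde{B}_A)\to\Irr_0(\tilde{b}_A)$. The central restriction requirement in Definition \ref{WeakAM} is then immediate since every character in sight has $Z$ in its kernel, so $\Irr(Z|\chi)=\Irr(Z|\Omega_D(\chi))=\{1_Z\}$. The compatibility with Brauer correspondence follows from Lemma \ref{brauerquotient}. For the equivariance requirement, every $\sigma\in\Aut(\tilde{\mathfrak{A}}_n)_{\tilde{D}}$ descends to $\bar\sigma\in\Aut(\mathfrak{A}_n)_D\subset\mathfrak{S}_n$ using $\Out(\mathfrak{A}_n)=\mathbb{Z}/2\mathbb{Z}$ for $n\neq 6$, and the action on non-spin characters factors through this quotient. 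The question therefore reduces to arranging $\Omega_D^0$ to be $N_{\mathfrak{S}_n}(D)$-equivariant, which follows from the explicit partition-quotient bijection underlying \cite{MichOl} and the fact that taking quotients commutes with partition conjugation.

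Finally I would handle the conditions 7.2(iii) of \cite{Spa} on projective representations and the trace equality 7.4. Since every relevant character is inflated from $\mathfrak{A}_n$, an affording representation descends to $\mathfrak{A}_n$ and extends to its stabilizer in $\mathfrak{S}_n$ in the obvious way; the projective representations required by \cite{Spa} may then be chosen with a factor set trivial on $Z$ and on the relevant stabilizer, so the cocycle compatibility holds trivially. The trace equality 7.4 then reduces to an ordinary character identity on stabilizers, which follows from the $\mathfrak{S}_n$-equivariance arranged above. The hardest step in this plan is precisely this equivariance, which requires a careful analysis of how the Michler-Olsson bijection interacts with the $\mathfrak{S}_n/\mathfrak{A}_n$-action on characters.
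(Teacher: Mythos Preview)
Your approach is correct in outline and reaches the same conclusion, but it is more laborious than what the paper does. The paper's proof is a one-line reduction to Lemma 8.1 of \cite{Spa}: that lemma already produces, for any simple group $S$ with $\Out(S)$ cyclic of prime order, an $\Aut(G)_D$-equivariant bijection together with the projective-representation data verifying conditions 7.2(iii), provided $p\nmid |Z(G)|$. The only missing hypothesis here is $p\nmid |Z|$, and the paper observes that in Sp\"ath's proof this hypothesis is used solely to guarantee that the bijection respects central characters (spin versus non-spin). Since Propositions \ref{heightzerobis} and \ref{heightzero} show there are no height zero spin characters on either side when the defect is non-central, that guarantee is automatic and Lemma 8.1 applies verbatim.

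What you do instead is essentially unpack Sp\"ath's Lemma 8.1 by hand: you build the bijection from Michler--Olsson, argue equivariance via the compatibility of partition quotients with conjugation, and then verify the cocycle conditions directly using extendibility to $\mathfrak{S}_n$. This is a legitimate route and has the virtue of being self-contained, but the step you yourself flag as ``hardest'' --- the $N_{\mathfrak{S}_n}(D)$-equivariance of the Michler--Olsson bijection and the ensuing cocycle compatibility --- is precisely what Sp\"ath's lemma packages for you. Invoking it turns your multi-paragraph argument into a single citation, at the cost of depending on an external black box.
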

\begin{proof}  
This is done using the proof of lemma 8.1 of \cite{Spa} (recall that $\Out(\mathfrak{A}_n)$ is cyclic simple in our cases). Indeed the condition that the prime number should not divide the order of the center is not met, but in the proof it is used to ensure that the equivariant bijection sends non spin characters to non spin characters and spin characters to spin characters (this is condition $3$ in the definition recalled above). This condition is true in our case because there are no spin characters of height zero in a block of $\tilde{\mathfrak{A}}_n$ of non central defect group and the same is true for Brauer correspondents (see propositions \ref{heightzerobis} and \ref{heightzero}).
\end{proof}

\section{Inductive AM condition for $\mathfrak{A}_{7}$ and $\mathfrak{A}_6$}
\label{sec:induct-am-cond}

\subsection{Simplifying lemmas}

Because the condition 7.2(iii) from definition 7.2 in \cite{Spa} only deal with one character at a time we will prove a "character by character" version of lemma 8.1 of \cite{Spa}. Then we will prove a simple lemma which will ensure that condition $2$ of definition \ref{WeakAM} is automatically satisfied. 

In this subsection the prime number for which everything is defined will not necessarily be the prime number $2$.

\begin{definition}
  Let $G$ be a finite group, $Z$ be its center. Let $\chi\in \Irr(G)$. Denote by $\mu_{\chi}$ the element of $\Irr(Z)$ such that $\chi_{|Z}=\chi(1).\mu_{\chi}$, let $K_{\chi}=\ker(\mu_{\chi})$, $G_{\chi}=G/K_{\chi}$ and let $\pi : G \rightarrow G_{\chi} $ be the canonical surjection. For such $\chi$ denote by $\bar{\chi}$ the element of $\Irr(G_{\chi})$ such that $\bar{\chi} \circ \pi=\chi$.
\end{definition}
Note that we have $\bar{\chi}=\chi\circ rep$ for any $Z$-section.

\begin{lem}\label{lemA6A7}
  Let $p$ be a prime number. Let $S$ be a finite simple group, $G$ its universal covering group and $Z$ its center. Let $rep$ be a $Z$-section and $D$ a defect group of some $p$-block of $G$. Let $\pi : G \rightarrow S$ be the canonical map.

Suppose that the weak AM condition is true for $S$ with respect to $D$, and denote by $\Omega_D$ the $\Aut(G)_D$-equivariant bijection. Now suppose that for all $\chi\in \Irr_0(G|D)$, $\Out(G)_{\chi}=\Aut(S)_{\chi}/S$ is cyclic of prime order or trivial.

Then the inductive AM condition is true for $S$ with respect to $D$.
\end{lem}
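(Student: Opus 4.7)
The plan is to verify, character by character, the remaining condition 7.2(iii) of \cite{Spa} on top of the weak AM condition that is already assumed. Fix $\chi \in \Irr_0(G|D)$ and set $\chi' = \Omega_D(\chi)$. By the $\Aut(G)_D$-equivariance of $\Omega_D$ (condition~1 of Definition~\ref{WeakAM}), the stabilizers coincide, so set $A := \Aut(G)_{D,\chi} = \Aut(G)_{D,\chi'}$; condition~2 gives $\mu_\chi = \mu_{\chi'} =: \mu$; and condition~3 gives the block-theoretic compatibility. The remaining task is to compare the two projective representations $\mathscr{P}_\chi$ and $\mathscr{P}_{\chi'}$ of $A$ obtained by applying the lemma preceding Definition~\ref{WeakAM} to chosen extensions of $\chi$ and $\chi'$, always using the same fixed $Z$-section $rep$.

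By construction, the factor sets of $\mathscr{P}_\chi$ and $\mathscr{P}_{\chi'}$ are both given by $\alpha(x,y) = \mu(rep(x)\,rep(y)\,rep(xy)^{-1})$ on the image of $G$ (respectively $N_G(D)$), and therefore already agree on the inner part. What remains is their comparison on the quotient $\Out(G)_\chi = \Aut(S)_\chi / S$. Under the hypothesis that $\Out(G)_\chi$ is trivial or cyclic of prime order, its Schur multiplier $H^2(\Out(G)_\chi, \mathbb{C}^\times)$ vanishes. Consequently, each residual $2$-cocycle is a coboundary and can be trivialized by twisting the corresponding projective representation by a suitable linear character $\Out(G)_\chi \to \mathbb{C}^\times$. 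Applying these twists simultaneously produces normalized versions of $\mathscr{P}_\chi$ and $\mathscr{P}_{\chi'}$ whose factor sets agree on all of $A$, which is the projective-representation matching required by 7.2(iii) in \cite{Spa}. The trace equation (7.4) of \cite{Spa} then follows by a direct calculation using the normalized extensions and the matching of factor sets.

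The main obstacle is ensuring that the twist on $\Out(G)_\chi$ can be chosen coherently on both sides so that the matching is genuinely simultaneous. This is guaranteed by the $\Aut(G)_D$-equivariance of $\Omega_D$, which makes $A$ act through the same quotient $\Out(G)_\chi$ on both $\chi$ and $\chi'$, together with the triviality of the Schur multiplier of a cyclic group of prime order, which is exactly the hypothesis of the lemma. Since the argument is carried out separately for each $\chi$, no coherence across different characters needs to be arranged, and the full inductive AM condition for $S$ with respect to $D$ follows.
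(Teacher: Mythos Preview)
Your argument has the right cohomological intuition but leaves a genuine gap. You write that you apply ``the lemma preceding Definition~\ref{WeakAM} to chosen extensions of $\chi$ and $\chi'$'', but that lemma produces a projective representation of $X/Z$ from an ordinary representation of a specific group $X$. To get a projective representation of $\Aut(S)_\chi$ (resp.\ of $\Aut(S)_{D,\chi}$) you must first \emph{construct} such an $X$ containing $G$ (resp.\ $N_G(D)$) with the correct quotient, and then show that $\chi$ (resp.\ $\chi'$) extends to it. You never build this group; the cyclicity hypothesis is precisely what makes such a construction possible, but you invoke it only afterwards, for adjusting factor sets. The paper handles this by first passing to the quotient $G_\chi=G/K_\chi$, then forming the semidirect product $O=G_\chi\rtimes\Out(S)_\chi$ (legitimate because $\Out(G)_\chi$ is cyclic and stabilises $K_\chi$); in this explicit group one has $O/Z(G_\chi)\cong\Aut(G_\chi)_{\bar\chi}=\Aut(S)_\chi$ and $O/G_\chi$ cyclic of prime order, so the proof of lemma~8.1 of \cite{Spa} applies verbatim and furnishes both the projective representations and the verification of 7.2(iii), including the trace identity (7.4).

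A second gap is your treatment of equation~(7.4). Matching factor sets by twisting with a linear character of $\Out(G)_\chi$ is not the same as establishing the specific trace relation in \cite{Spa}; saying it ``follows by a direct calculation'' is not a proof. The paper avoids this issue entirely by reducing to the situation already analysed in \cite{Spa}, where (7.4) is verified as part of the proof of lemma~8.1. If you want to keep your direct approach, you must either carry out that calculation explicitly or explain precisely why, after your normalisations, the two extensions you produce coincide with those constructed in \cite{Spa}.
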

\begin{proof}
To see that a character satisfying the hypothesis also satisfies the conditions 7.2(iii) of \cite{Spa} we use the proof of lemma 8.1 of \cite{Spa}. Let $\bar{rep}$ be the composition of $rep : S \rightarrow G$ and of the canonical map $G \rightarrow G_{\chi} $. This is a $Z(G_{\chi})$-section.
Define $O=G_{\chi}\rtimes \Out(S)_{\chi}$ (this is possible because $\Out(G)_{\chi}$ is cyclic and $\Aut(S)_{\chi}$ preserves $K_{\chi}$). Then we have $G_{\chi}\triangleleft O$, $C_O(G_{\chi})=Z(G_{\chi})$, $O/Z(G_{\chi})=\Aut(G_{\chi})_{\bar{\chi}}$ and $O/G_{\chi}$ is trivial or cyclic of prime order. The end of the proof of lemma 8.1 of \cite{Spa} shows that there exists a projective representation $\mathscr{P}$ of $\Aut(G_{\chi})_{\bar{\chi}}$ obtained from $\bar{\chi}$ using $\bar{rep}$ and a projective representation of $\Aut(G_{\chi})_{D.K_{\chi}/K_{\chi}, \bar{\chi}}$ obtained from $\bar{\Omega}_D(\chi)$ using $\bar{rep}$ (note that $\bar{\Omega}_D(\chi)$ is well defined thanks to condition $2$ in definition \ref{WeakAM}). But $\Aut(G_{\chi})_{\bar{\chi}}=\Aut(S)_{\chi}$ and $\Aut(G_{\chi})_{D.K_{\chi}/K_{\chi}, \bar{\chi}}=\Aut(S)_{D,\chi}$ (see \cite{GLS3}, corollary 5.1.4) and because of our choice of $\bar{rep}$ we get projective representations obtained from $\chi$ and $\Omega_D(\chi)$ using $rep$. Those projective representations are according to proof of lemma 8.1 of \cite{Spa} such that the conditions 7.2(iii) in \cite{Spa} are satisfied.
\end{proof}

Now let us give a lemma that will simplify the checking of condition $2$ of definition \ref{WeakAM} :

\begin{lem}\label{kernelheight}
 Let $p$ be any prime number. Let $S$ be a perfect simple group, $G$ its universal covering group and $Z=Z(G)$. Let $D$ be a Sylow $p$-subgroup of $G$. Suppose that :
  \begin{enumerate}
  \item $Z(G)_{p}$ is simple and $D$ is not abelian
  \item there exists an $\Aut(G)_D$-equivariant bijection $\Omega_D : \Irr_0(G|D) \rightarrow \Irr_0(N_G(D)|D)$
  \item $\Omega_D(\Irr_0(B))=\Irr_0(b)$ whenever $B$ and $b$ are Brauer correspondent
  \end{enumerate}
Then the weak AM condition is satisfied for $S$ with respect to $D$.
\end{lem}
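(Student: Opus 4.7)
Conditions 1 and 3 of Definition \ref{WeakAM} are explicitly among the hypotheses, so the plan reduces to verifying condition 2: for every $\chi\in \Irr_0(G|D)$, $\Irr(Z|\chi)=\Irr(Z|\Omega_D(\chi))$. Since $Z$ is abelian, $\chi|_Z=\chi(1)\mu_\chi$ for a unique $\mu_\chi\in \Irr(Z)$, so it is enough to prove $\mu_\chi=\mu_{\Omega_D(\chi)}$. I would split $Z$ as $Z_p\times Z_{p'}$ and treat the two restrictions separately.

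On $Z_{p'}$, the strategy is to exploit that $\chi$ and $\Omega_D(\chi)$ lie in Brauer correspondent blocks $B$ and $b$ (hypothesis 3). Two standard block-theoretic facts then yield the agreement: all characters in a common block share a central character on $Z(G)_{p'}$, and Brauer correspondence preserves this restricted central character exactly. The latter comes from the congruence of central characters coming from the Brauer map at class sums $\{z\}$ for $z\in Z(G)$, which becomes an equality on $p'$-elements because the reduction map is injective on $p'$-roots of unity. This yields $\mu_\chi|_{Z_{p'}}=\mu_{\Omega_D(\chi)}|_{Z_{p'}}$.

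On $Z_p=Z(G)_p$ the plan is to show $Z(G)_p$ lies in the kernel of both $\chi$ and $\Omega_D(\chi)$, so that both restrictions are trivial. Since $D$ is Sylow, every element of $\Irr_0(G|D)$ has degree prime to $p$ (height zero in a maximal defect block), and as $G$ is perfect (being the universal covering of a simple group), Lemma \ref{simplelem} gives $Z(G)_p=G'\cap Z(G)_p\subseteq \ker \chi$. For $\Omega_D(\chi)\in \Irr_0(N_G(D)|D)$, which is also of maximal defect since $D$ is a Sylow $p$-subgroup of $N_G(D)$, Lemma \ref{simplelem} applied to $N_G(D)$ yields $N_G(D)'\cap Z(N_G(D))_p\subseteq \ker \Omega_D(\chi)$. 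As $Z(G)_p\subseteq Z(N_G(D))_p$ is automatic, what remains is $Z(G)_p\subseteq N_G(D)'$. This is precisely where hypothesis 1 is used: $[D,D]$ is a nontrivial normal subgroup of the $p$-group $D$ (because $D$ is non-abelian), hence meets $Z(D)$ nontrivially; since $Z(G)_p$ is a simple subgroup of $Z(D)$ normal in $D$, a careful $p$-group argument produces $Z(G)_p\subseteq [D,D]\subseteq N_G(D)'$. I expect this last inclusion to be the main obstacle, as it hinges delicately on the non-abelian structure of $D$ together with the simplicity of $Z(G)_p$.
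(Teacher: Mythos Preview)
Your overall plan coincides with the paper's: reduce condition~2 of Definition~\ref{WeakAM} to showing that $Z(G)_p$ lies in the kernel of every height zero character on both sides (the paper's proof is a single sentence invoking Lemma~\ref{simplelem} and perfectness of $G$; your treatment of the $p'$-part via central characters and Brauer correspondence makes explicit what the paper leaves implicit in the phrase ``it suffices'').

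There is, however, a genuine gap at the point you yourself flag. From ``$D$ non-abelian'' and ``$Z(G)_p$ simple and central in $D$'' alone one cannot conclude $Z(G)_p\subseteq [D,D]$. Abstractly, take $D=C_p\times E$ with $E$ an extraspecial $p$-group and let the order-$p$ central subgroup be the first factor; then $[D,D]=[E,E]=Z(E)$ meets $Z(D)$ nontrivially, yet misses the chosen $C_p$ entirely. So the ``careful $p$-group argument'' you allude to does not exist at that level of generality, and hypothesis~1 by itself does not force the inclusion.

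What rescues the step is not hypothesis~1 but the perfectness of $G$ together with $D$ being Sylow in $G$ (both already available). Use the transfer $V:G\to D/D'$: for $z\in Z(G)_p\subseteq Z(G)\cap D$ one has $V(z)=z^{[G:D]}D'$, and since $G=G'$ the map $V$ is trivial, so $z^{[G:D]}\in D'$. As $[G:D]$ is prime to $p$ and $z$ is a $p$-element, this forces $z\in D'$. Hence $Z(G)_p\subseteq D'\subseteq N_G(D)'$, and then Lemma~\ref{simplelem} applied to $N_G(D)$ finishes exactly as you intended. Note that with this argument hypothesis~1 is in fact not needed for the conclusion; the paper records it, but the work is done by perfectness and the Sylow condition.
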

\begin{proof}
  It suffices to show that all height zero characters of $\Irr_0(G|D)$ and of $\Irr_0(N_G(D)|D)$ have the $p$-part of the center denoted by $Z(G)_p$ in its kernel. But this is a direct consequence of the hypothesis, of lemma \ref{simplelem} and the fact that $G$ is perfect.
\end{proof}

\subsection{Inductive AM condition for $\mathfrak{A}_7$}

Using \cite{GAP4} we have the following :

\begin{lem}
 The group $6.\mathfrak{A}_7$ has $8$ blocks. Four of them have central defect. Three of them denoted by $B_1,B_2,B_3$ have maximal defect (and numerical defect $4$). The remaining one denoted by $B_4$ has numerical defect $3$.
\end{lem}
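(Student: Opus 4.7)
My plan is to verify this lemma by a direct computation with GAP4, which is the tool cited in the statement. Concretely, I would load the character table \texttt{tbl := CharacterTable("6.A7")} from the GAP character table library and call \texttt{PrimeBlocks(tbl,2)}, which returns the partition of $\Irr(6.\mathfrak{A}_{7})$ into $2$-blocks together with the numerical defect of each block.

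Since $|6.\mathfrak{A}_{7}| = 15120 = 2^{4}\cdot 3^{3}\cdot 5\cdot 7$, the maximal numerical defect is $4$; since $Z(6.\mathfrak{A}_{7})\cong\mathbb{Z}/6\mathbb{Z}$ has a Sylow $2$-subgroup of order $2$, a block of central defect has numerical defect $1$. The output of \texttt{PrimeBlocks} should yield eight blocks with defect multiset $\{1,1,1,1,3,4,4,4\}$, which is exactly the content of the claim.

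For an independent theoretical sanity check, let $Z_{3}$ be the unique subgroup of order $3$ in $Z(6.\mathfrak{A}_{7})$; then $6.\mathfrak{A}_{7}/Z_{3}\cong\tilde{\mathfrak{A}}_{7}$. The triangular cores of partitions of $7$ are $(1)$ (weight $3$) and $(2,1)$ (weight $2$), so by the analogue of lemma \ref{lemdom} for $\tilde{\mathfrak{A}}_{n}$ the group $\tilde{\mathfrak{A}}_{7}$ has exactly two $2$-blocks, of defects $4$ (principal) and $3$. These must account for the two blocks of $6.\mathfrak{A}_{7}$ on which $Z_{3}$ acts trivially, namely one block of defect $4$ and the unique block of defect $3$. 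The remaining six blocks split, symmetrically by Galois conjugation, between the two nontrivial characters of $Z_{3}$, so three blocks per character, and the only way to match the overall totals is two central-defect blocks and one maximal-defect block in each Galois orbit. This reproduces the defect multiset above.

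The main, and mild, obstacle is purely organizational: ensuring that GAP's defect convention in \texttt{PrimeBlocks} matches the convention of \cite{NT} used throughout the paper, and confirming that the Sylow $2$-subgroup of $Z(6.\mathfrak{A}_{7})$ really has order $2$ so that "central defect" corresponds to numerical defect $1$. Both are routine, and with these in place the GAP output is a direct proof.
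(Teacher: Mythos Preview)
Your proposal is correct and follows exactly the paper's approach: the paper's proof of this lemma is simply the one-line assertion that it is obtained using \cite{GAP4}. Your additional theoretical sanity check via the quotient $6.\mathfrak{A}_{7}/Z_{3}\cong\tilde{\mathfrak{A}}_{7}$ and the Galois symmetry between the two nontrivial characters of $Z_{3}$ is a pleasant consistency argument that goes beyond what the paper provides, though as you note it relies on the GAP total of eight blocks rather than being fully independent.
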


We first deal with blocks of maximal defect. Let $D_1$ be a Sylow subgroup of $6.\mathfrak{A}_7$ and define $A_1=\Aut(6.\mathfrak{A}_7)_{D_1}$. In view of lemmas \ref{kernelheight} and \ref{lemA6A7} and the fact that $\Out(\mathfrak{A}_7)$ is cyclic, we only have to find an $A_1$-equivariant bijection preserving Brauer corresponding blocks.

\begin{lem}
For all $i\in \{1,2,3\}$, $|\Irr_0(B_i)|=4$ and there exists a labelling of the elements of $\Irr_0(B_i)=\{X^i_1,X^i_2,X^i_3,X^i_4\}$ such that :
\begin{itemize}
\item for all $j\in \{1,2,3,4\}$, $(X^2_j)^{A_1}=\{X^2_j,X^{3}_j\}$ 
\item all other characters of $B_1,B_2,B_3$ are $A_1$-invariant
\end{itemize}
\end{lem}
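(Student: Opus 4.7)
My plan is to exploit the central-character grading of $2$-blocks of $6.\mathfrak{A}_7$ together with the way the outer automorphism of $\mathfrak{A}_7$ acts on the centre. The centre $Z(6.\mathfrak{A}_7) \cong \mathbb{Z}/6\mathbb{Z}$ has a $3$-part $Z_3 \cong \mathbb{Z}/3\mathbb{Z}$ which lies in the kernel of every $2$-modular representation and hence acts as a single linear character on every $2$-block. The three blocks $B_1, B_2, B_3$ of maximal defect therefore distribute one per linear character of $Z_3$: I let $B_1$ be the block with trivial $Z_3$-central character (equivalently, its constituents factor through $2.\mathfrak{A}_7$), and let $B_2, B_3$ be the two blocks corresponding to the two faithful characters of $Z_3$.

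Next I would exhibit a lift $\sigma$ of the outer involution of $\mathfrak{A}_7$ which lies in $A_1$ and inverts $Z_3$. A Sylow $2$-subgroup $\pi(D_1)$ of $\mathfrak{A}_7$ has order $8$ and sits as a normal subgroup of index $2$ inside a Sylow $2$-subgroup of $\mathfrak{S}_7$ of order $16$, so $N_{\mathfrak{S}_7}(\pi(D_1)) \supsetneq N_{\mathfrak{A}_7}(\pi(D_1))$, and any outer element normalising $\pi(D_1)$ lifts to an element of $A_1$. The fact that this $\sigma$ inverts $Z_3$ is standard (it can be read off the ATLAS character table of $6.\mathfrak{A}_7$, or deduced from the nonexistence of a central extension of $\mathfrak{S}_7$ by $\mathbb{Z}/3\mathbb{Z}$ since $H^2(\mathfrak{S}_7,\mathbb{Z}/3\mathbb{Z})=0$). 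Since inner automorphisms fix characters pointwise, $A_1$ acts on $\Irr(6.\mathfrak{A}_7)$ through the single involution $\sigma$, which must then stabilise $B_1$ setwise and swap $B_2$ with $B_3$.

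Labelling $\Irr_0(B_2) = \{X^2_1, \ldots, X^2_4\}$ arbitrarily and declaring $X^3_j := \sigma(X^2_j)$, the desired orbit structure $\{X^2_j, X^3_j\}$ on $\Irr_0(B_2) \cup \Irr_0(B_3)$ is then automatic. It only remains to verify (i) $|\Irr_0(B_i)| = 4$ for each $i$, and (ii) that every element of $\Irr_0(B_1)$ is $\sigma$-fixed. Both statements are read off the character table of $6.\mathfrak{A}_7$, which is computed using \textsf{GAP} as in the preceding lemma. I expect the main obstacle to be (ii): \emph{a priori} $\sigma$ could pair up characters inside $\Irr_0(B_1)$, and we must exclude this. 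The cleanest way is to observe that the four height-zero characters of $\Irr_0(B_1)$ are pairwise distinguished by their degrees, leaving $\sigma$ no room to move any of them.
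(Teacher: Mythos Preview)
Your argument is correct and considerably more structural than the paper's, which simply records the result of a \textsf{GAP} computation using \texttt{PrimeBlocks} and \texttt{BlocksInfo}. You instead explain \emph{why} the orbit pattern must look the way it does: the three maximal-defect blocks are separated by their central character on the $3$-part $Z_3$ of the centre, and since the outer involution inverts $Z_3$ it must interchange $B_2$ and $B_3$ while fixing the principal block $B_1$. The labelling $X^3_j := \sigma(X^2_j)$ then forces the claimed orbits on $\Irr_0(B_2)\cup\Irr_0(B_3)$ without any further computation. For $B_1$ your degree argument is clean and valid: the height-zero characters there are exactly the odd-degree characters of $\mathfrak{A}_7$, namely those of degree $1,15,21,35$, so $\sigma$ cannot move any of them. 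The only residual computational input you need is $|\Irr_0(B_2)|=4$ and the confirmation that $\sigma$ inverts $Z_3$; the first genuinely requires the character table (or \textsf{GAP}), and for the second your appeal to the ATLAS is the safe route---the $H^2(\mathfrak{S}_7,\mathbb{Z}/3\mathbb{Z})=0$ heuristic points in the right direction but would need a little more care to turn into a proof that no lift of the transposition can centralise $Z_3$. What your approach buys is a transparent reason for the $B_2\leftrightarrow B_3$ swap that transfers verbatim to the normaliser side, whereas the paper's approach has the virtue of being entirely mechanical and uniform across both lemmas.
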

\begin{proof}
  This is a straightforward calculation using \cite{GAP4} and more precisely the \cite{AtlasRep} package and functions such as PrimeBlocks and BlocksInfo.
\end{proof}
\begin{lem}
  Keep the notations of the previous lemma. Then the group $N_{6.\mathfrak{A}7}(D_1)$ has $3$ blocks of maximal defect denoted by $b_1, b_2,b_3$ and labelled such that $b_i$ is the Brauer correspondent of $B_i$. Moreover for all $i\in \{1,2,3\}$, there exists a labelling of $\Irr_0(b_i)=\{x^i_1,x^i_2,x^i_3,x^i_4\}$ such that :
\begin{itemize}
\item for all $j\in\{1,2,3,4\}$, $(x^2_j)^{A_1}=\{x^2_j,x^{3}_j\}$
\item all other characters are $A_1$-invariant
\end{itemize}
\end{lem}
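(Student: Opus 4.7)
The plan is to mirror the proof of the previous lemma, transferring the character-theoretic data from $6.\mathfrak{A}_7$ to its normalizer $N = N_{6.\mathfrak{A}_7}(D_1)$ via the Brauer correspondence, and then computing the $A_1$-action explicitly. First I would realize $N$ concretely inside $6.\mathfrak{A}_7$: using a representation of $6.\mathfrak{A}_7$ supplied by \cite{AtlasRep}, fix a Sylow $2$-subgroup $D_1$ and compute $N$ as a subgroup in GAP. Applying \texttt{PrimeBlocks} at $p=2$ to the character table of $N$ yields its block decomposition, and exactly three of the resulting blocks will have numerical defect $4$, giving the candidates $b_1, b_2, b_3$.

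Next I would pin down the labelling $b_i \leftrightarrow B_i$ by the Brauer correspondence. This can be verified either by comparing central characters of $B_i$ and $b_i$ on $2$-regular classes (Brauer's first main theorem) or by using the Brauer correspondent functionality in GAP. One then reads off $|\Irr_0(b_i)| = 4$ directly from the character table of $N$, confirming the Alperin--McKay count for these blocks and giving the four characters in each $\Irr_0(b_i)$ that are available for labelling.

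For the $A_1$-equivariance I would use the general fact that the Brauer correspondence commutes with the action of $\Aut(6.\mathfrak{A}_7)_{D_1}$: consequently $A_1$ permutes $\{b_1, b_2, b_3\}$ in exactly the same way it permutes $\{B_1, B_2, B_3\}$, so $b_1$ is $A_1$-stable and $\{b_2, b_3\}$ forms a single $A_1$-orbit. To pin down the fine orbit structure on height zero characters, I would pick generators of $A_1$, restrict their conjugation action to $N$, compute the induced permutation on $\Irr(N)$, and check directly that inside $b_1$ every height zero character is $A_1$-fixed while the four characters of $b_2$ and of $b_3$ can be paired so as to realise the displayed relation $(x^2_j)^{A_1} = \{x^2_j, x^3_j\}$.

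The main obstacle will be the explicit construction of the $A_1$-action on $N$: although $\Out(\mathfrak{A}_7)$ is cyclic of order $2$ and is known to lift to $\Aut(6.\mathfrak{A}_7)$, one needs a representative of the nontrivial outer class that actually stabilises $D_1$ setwise, so that its restriction to $N$ is a well-defined automorphism of $N$. Once such a representative is produced concretely in GAP, the verification of the orbit structure on $\Irr_0(b_i)$ is a mechanical finite computation, after which the desired labelling follows by simply matching the four characters of $b_2$ with the four of $b_3$ along the action of the nontrivial element of $A_1$.
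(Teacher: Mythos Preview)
Your proposal is correct and takes essentially the same approach as the paper: the paper's proof is the single sentence ``This is again straightforward calculation using \cite{GAP4}'', and what you have written is a reasonable, more detailed blueprint of exactly that computation. Your elaboration on how to realise $A_1$ concretely and match the Brauer correspondents goes beyond what the paper records, but it is the natural way to carry out the asserted GAP check.
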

\begin{proof}
  This is again straightforward calculation using \cite{GAP4}.
\end{proof}

These lemmas allow us to build an $A_1$-equivariant bijection $\Omega_{D_1} : \Irr_0(6.\mathfrak{A}_7|D_1) \rightarrow \Irr_0(N_{6.\mathfrak{A}_7}(D_1)|D_1), X^i_j\mapsto x_j^i$. Then the condition of lemma \ref{lemA6A7} are satisfied and the inductive AM condition is satisfied for $\mathfrak{A}_7$ with respect to $D_1$.

To deal with the last block we need the following :

\begin{lem}
  Let $D_2$ be the defect group of $B_4$ and define $A_2=\Aut(6.\mathfrak{A}_7)_{D_2}$. Let $b_4$ be the Brauer correspondent of $B_4$ in $N_{6.\mathfrak{A}_7}(D_2)$ and let $\mu$ be the trivial character of $Z=Z(6.\mathfrak{A}_7)$. Then :
  \begin{itemize}
  \item $|\Irr_0(B_4)|=4$ and all height zero characters are $A_2$-invariant
  \item $\forall \chi \in \Irr_0(B_4), \chi_{|Z}=\chi(1).\mu$
  \item $|\Irr_0(b_4)|=4$ and all height zero characters are $A_2$-invariant
  \item $\forall \chi \in \Irr_0(b_4), \chi_{|Z}=\chi(1).\mu$
\end{itemize}
\end{lem}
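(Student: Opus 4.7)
The plan is to verify all four assertions computationally, following exactly the pattern of the two preceding lemmas, via \cite{GAP4} together with the \cite{AtlasRep} package.

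First I would extract the character table of $6.\mathfrak{A}_7$, compute its $2$-blocks using PrimeBlocks and BlocksInfo, and identify $B_4$ as the unique $2$-block of numerical defect $3$. The height zero characters of $B_4$ are those $\chi$ with $\nu_{2}(\chi(1)) = \nu_{2}(|6.\mathfrak{A}_7|) - 3$; a direct count gives $|\Irr_0(B_4)|=4$. For each such $\chi$ the restriction $\chi_{|Z}$ is read off from the associated central character, and one checks that all four restrictions equal $\chi(1)\mu$---equivalently, that the four characters are inflated from characters of $\mathfrak{A}_7$.

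For $A_2$-invariance I would use that $\Out(\mathfrak{A}_7)$ is cyclic of order $2$, so the action of $A_2$ on $\Irr(6.\mathfrak{A}_7)$ factors through this quotient. Since the four height zero characters in $\Irr_0(B_4)$ are non-spin, their invariance reduces to an inspection of the character table of $\mathfrak{S}_7$, which is immediate.

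The local side is analogous but requires a concrete model of $6.\mathfrak{A}_7$. I would load a faithful matrix representation from AtlasRep, compute a defect group $D_2$ of $B_4$ (of order $8$) inside a Sylow $2$-subgroup using the block idempotent, form $N_{6.\mathfrak{A}_7}(D_2)$ together with its character table and its $2$-blocks, and identify $b_4$ as the Brauer correspondent of $B_4$ (matched through the Brauer map on central characters and block idempotents). The same three checks---the count of height zero characters, the restriction to $Z$, and the invariance under $A_2$---are then performed on the four characters in $\Irr_0(b_4)$, yielding the last two bullet points. The main obstacle is precisely this local bookkeeping: pinning down $D_2$ up to $6.\mathfrak{A}_7$-conjugacy, constructing the normalizer as a concrete group, and matching its local blocks to the global ones through the Brauer correspondence; once this correspondence is explicit, all four verifications are direct inspections of the resulting character tables.
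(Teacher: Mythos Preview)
Your proposal is correct and follows essentially the same computational approach as the paper, which likewise reduces everything to a \cite{GAP4} calculation. The one practical difference worth noting is in the local bookkeeping: rather than computing $D_2$ from the block idempotent and then matching $b_4$ via the Brauer map as you suggest, the paper simply runs over the subgroups of index two in a Sylow $2$-subgroup of $6.\mathfrak{A}_7$, observes that exactly one of their normalizers possesses a $2$-block of numerical defect $3$, and concludes that this normalizer is $N_{6.\mathfrak{A}_7}(D_2)$ and this block is $b_4$; this sidesteps the explicit Brauer correspondence computation you flagged as the main obstacle.
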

\begin{proof}
This is again proved using \cite{GAP4}. Note that $N_{6.\mathfrak{A}_7}(D_2)$ is found as the only normalizer of a subgroup of index two of a Sylow subgroup of $6.\mathfrak{A}_7$ which has a block of numerical defect $3$. This block is necessarily the block $b_4$.
\end{proof}

Thus we can produce an $A_2$-equivariant map $\Omega_{D_2} : \Irr_0(6.\mathfrak{A}_7|D_2) \rightarrow \Irr_0(N_{6.\mathfrak{A}_7}(D_2)|D_2)$ so that the weak inductive AM condition is true for $\mathfrak{A}_7$ with respect to $D_2$. Then as before the condition of lemma \ref{lemA6A7} is trivially satisfied for all elements of $\Irr_0(6.\mathfrak{A}_7|D_2)$. 

Everything put together gives :

\begin{prop}\label{indAM7}
  The inductive AM condition holds for $\mathfrak{A}_7$.
\end{prop}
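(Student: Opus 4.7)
The plan is to verify, for each non-central defect group $D$ of the universal covering group $6.\mathfrak{A}_7$, the weak AM condition of Definition \ref{WeakAM} together with the projective-representation conditions 7.2(iii) of \cite{Spa}. Since $\Out(\mathfrak{A}_7)\cong\mathbb{Z}/2\mathbb{Z}$ is cyclic of prime order, Lemma \ref{lemA6A7} reduces the full inductive AM condition to Definition \ref{WeakAM}. The first step is thus to enumerate the $2$-blocks of $G=6.\mathfrak{A}_7$ using GAP (via \cite{AtlasRep}) and confirm the decomposition $4+3+1$ into central-defect blocks, maximal-defect blocks $B_1,B_2,B_3$, and a single non-maximal non-central block $B_4$ of defect $3$. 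The central-defect cases are handled by Theorem 8.14 of \cite{NT}, so only the maximal-defect blocks and $B_4$ need attention.

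For the maximal-defect blocks, fix a Sylow $2$-subgroup $D_1\leq G$ and set $A_1=\Aut(G)_{D_1}$. I would first invoke Lemma \ref{kernelheight}: since $Z(G)_2$ is cyclic of order $2$ (hence simple) and $D_1$ is non-abelian, condition $2$ of Definition \ref{WeakAM} becomes automatic once the Brauer-correspondence-preserving $A_1$-equivariant bijection is produced. I would then compute with GAP the sets $\Irr_0(B_i)$ and $\Irr_0(b_i)$ for the Brauer correspondents $b_i$ in $N_G(D_1)$, determine the action of $A_1$ on each (the nontrivial outer element of $\Out(\mathfrak{A}_7)$ exchanges two of the three blocks and fixes the third), and choose simultaneous labellings of both sides so that the bijection $X_j^i\mapsto x_j^i$ is manifestly $A_1$-equivariant. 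Lemma \ref{lemA6A7} then upgrades the weak condition to the inductive AM condition relative to $D_1$.

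For the defect-$3$ block $B_4$, Lemma \ref{kernelheight} is not available, so condition $2$ of Definition \ref{WeakAM} must be checked directly. I would locate $N_G(D_2)$ computationally as the unique normalizer of an index-$2$ subgroup of a Sylow $2$-subgroup that admits a block of numerical defect $3$, and then verify via GAP that all four height-zero characters of $B_4$ and of its Brauer correspondent $b_4$ are trivial on $Z(G)$; this makes condition $2$ hold trivially. Checking furthermore that these characters are $A_2$-invariant on both sides yields an $A_2$-equivariant bijection $\Omega_{D_2}$, and Lemma \ref{lemA6A7} concludes. The main obstacle is the combinatorial bookkeeping: one has to choose labellings of $\Irr_0(B_i)$ and $\Irr_0(b_i)$ that are simultaneously compatible with Brauer correspondence and with the outer $A_i$-orbit structure, and confirm by GAP that such a matching is possible — everything else is a routine application of the earlier lemmas.
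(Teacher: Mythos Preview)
Your proposal is correct and follows essentially the same approach as the paper: enumerate the blocks of $6.\mathfrak{A}_7$ via GAP, use Lemma~\ref{kernelheight} together with Lemma~\ref{lemA6A7} for the three maximal-defect blocks (building the $A_1$-equivariant bijection $X^i_j\mapsto x^i_j$ from the computed orbit structure), and for $B_4$ verify directly with GAP that the height-zero characters on both sides are $A_2$-invariant with trivial central restriction before applying Lemma~\ref{lemA6A7}. The only superfluous remark is your appeal to Theorem~8.14 of \cite{NT} for the central-defect blocks: Definition~\ref{WeakAM} is stated only for non-central defect groups, so those blocks require no verification.
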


\subsection{Inductive AM condition for $\mathfrak{A}_6$}

We know by \cite{Fry}, theorem 5.8, that :
\begin{prop}\label{indAM6}
  The inductive AM condition holds for $\mathfrak{A}_6$.
\end{prop}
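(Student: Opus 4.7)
The plan is to follow exactly the same template as for $\mathfrak{A}_7$ in the previous subsection, but working in the universal covering group $6.\mathfrak{A}_6$ (whose center is $\mathbb{Z}/6\mathbb{Z}$) rather than a double cover, and paying extra attention to the fact that $\Out(\mathfrak{A}_6) \cong (\mathbb{Z}/2)^2$ is not cyclic. First I would use \cite{GAP4} (with the \cite{AtlasRep} package) to enumerate the $2$-blocks of $6.\mathfrak{A}_6$, discard those of central defect (which are handled by theorem 8.14 of \cite{NT}), and for each remaining block $B$ with defect group $D$ compute $|\Irr_0(B)|$, the normalizer $N_{6.\mathfrak{A}_6}(D)$, its Brauer correspondent $b$, and $|\Irr_0(b)|$. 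These numbers must coincide block by block; this is the Alperin--McKay part.

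Second, for each such $D$, I would determine the action of $A_D := \Aut(6.\mathfrak{A}_6)_D = \Aut(\mathfrak{A}_6)_D$ on $\Irr_0(B)$ and $\Irr_0(b)$ using the induced permutation action on character tables in \cite{GAP4}, and produce an explicit $A_D$-equivariant bijection $\Omega_D : \Irr_0(6.\mathfrak{A}_6|D) \rightarrow \Irr_0(N_{6.\mathfrak{A}_6}(D)|D)$ compatible with Brauer correspondence. Together with lemma \ref{kernelheight}, whose hypothesis that $Z(G)_2$ is simple and $D$ is non-abelian needs to be checked in the maximal defect case (while the non-maximal defect cases must be handled by direct inspection of the central characters $\mu_\chi$ as in the $\mathfrak{A}_7$ argument), this gives the weak AM condition of definition \ref{WeakAM}.

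The main obstacle is the last step, namely verifying conditions 7.2(iii) of \cite{Spa}, because lemma \ref{lemA6A7} demands that $\Out(G)_\chi$ be cyclic of prime order or trivial for each relevant $\chi$, and $\Out(\mathfrak{A}_6)$ itself is not cyclic. I would therefore compute, for every $\chi \in \Irr_0(6.\mathfrak{A}_6|D)$, the stabilizer $\Out(\mathfrak{A}_6)_\chi$; in the generic situation each such stabilizer will already be a proper (hence cyclic) subgroup of $(\mathbb{Z}/2)^2$ because the four outer classes act nontrivially on most character labels, and lemma \ref{lemA6A7} then applies character by character. The problematic cases are those $\chi$ fixed by the full $\Out(\mathfrak{A}_6)$; for these one has to go back to the proof of lemma 8.1 of \cite{Spa} and construct projective representations of $\Aut(6.\mathfrak{A}_6)_{\bar\chi}$ and of $\Aut(6.\mathfrak{A}_6)_{D.K_\chi/K_\chi,\bar\chi}$ from $\chi$ and $\Omega_D(\chi)$ using a common $Z$-section, and check equation 7.4 of \cite{Spa} by hand.

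Because these verifications have already been carried out by Fry in \cite{Fry}, theorem 5.8, an alternative and much shorter route is simply to invoke that theorem, which is what I would do to keep the paper self-contained at the level of citations rather than redoing the \cite{GAP4} bookkeeping.
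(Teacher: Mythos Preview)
Your proposal is correct and matches the paper exactly: the paper's proof is nothing more than the citation to \cite{Fry}, theorem 5.8, which you yourself single out as the short route. Your longer alternative plan also mirrors the paper's accompanying remark closely; the only refinements the paper adds are that in $6.\mathfrak{A}_6$ every non-central block already has maximal defect (so your ``non-maximal defect'' digression is unnecessary), that there is exactly one height-zero character $\chi$ with $\Out(\mathfrak{A}_6)_\chi$ equal to the full Klein four group, and that for this $\chi$ conditions 7.2(iii) follow because $\chi$ and $\Omega_D(\chi)$ are restrictions of linear characters of the relevant automorphism groups and $\Aut(\mathfrak{A}_6)_D$ is a $2$-group, rather than by a direct verification of equation 7.4.
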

\begin{rmk}
  All the blocks have central defect groups or maximal defect groups. The proof can be done using lemma \ref{lemA6A7} for all characters but one that we denote by $\chi$ lying in the principal block which is stable under the whole automorphism group. Let $D$ be a Sylow subgroup. One can check that in $6.\mathfrak{A}_6$ and in $N_{6.\mathfrak{A}_6}(D)$, $\chi$ and its image under a constructed equivariant bijection are in fact restrictions of linear characters of the relevant automorphisms groups and that $\Aut(\mathfrak{A}_6)_D$ is a $2$-group so that the conditions 7.2(iii) are satisfied. Moreover the characters of height zero have $Z(6.\mathfrak{A}_6)_2$ in their kernel because all the blocks we are dealing with are of maximal defect so that lemma \ref{simplelem} applies and the condition $2$ of definition \ref{WeakAM} is satisfied.
\end{rmk}

This concludes the proof of theorem \ref{InductiveAM}.

\bibliographystyle{alpha}
\bibliography{bibio}

\noindent\textsc{Universit\'e Paris-Diderot, Institut Math\'ematique de Jussieu - Paris Rive Gauche, B\^atiment Sophie Germain, 75205 Paris Cedex 13, France.}\\
\texttt{E-mail : denoncin@math.jussieu.fr}

\end{document}